\documentclass{article}
\usepackage{amscd,amsmath}
\usepackage{amssymb}
\usepackage{amsthm}
\usepackage{color}
\usepackage{hyperref}
\usepackage{enumerate}
\usepackage{geometry}
\theoremstyle{definition}
\newtheorem{theorem}{Theorem}[section]
\newtheorem{definition}[theorem]{Definition}
\newtheorem{proposition}[theorem]{Proposition}
\newtheorem{lemma}[theorem]{Lemma}
\newtheorem{corollary}[theorem]{Corollary}
\newtheorem{remark}[theorem]{Remark}

\numberwithin{equation}{section}
\newtheorem{problem}{Problem}

\newtheorem{acknowledgements}{Acknowledgement}

\begin{document}

\title{First Chen Inequality for CR-Warped Product Submanifolds of a Complex Space Form and Applications}

\author{Abdulqader Mustafa\\Department of Mathematics\\ Faculty of Arts and Science, Palestine Technical University, Kadoorei, Tulkarm, Palestine\\abdulqader.mustafa@ptuk.edu.ps\\
\and Monika Sati\\Department of Mathematics \\Govt. PG. College Joshimath, Chamoli Uttarakhand\\monikasati123@gmail.com\\
\and Uday Chand De\\Department of Pure Mathematics\\University of Calcutta, 35, Ballygunge Circular Road, Kolkata-700019, West Bengal, India\\uc de@yahoo.com\\
\and Cenap \"Ozel\\Department of Mathematics\\ Faculty of Science, King Abdulaziz University, 21589, Jeddah, Saudi Arabia\\cozel@kau.edu.sa\\
\and Alexander Pigazzini\\Mathematical and Physical Science Foundation, 4200 Slagelse, Denmark\\pigazzini@topositus.com}
\date{}
\maketitle

\begin{abstract}
In this paper, the first Chen inequality is proved for CR-warped product submanifolds in complex space forms. This inequality involves intrinsic invariants (a leaf-wise $\delta$-invariant and the sectional curvature) controlled by an extrinsic one (the mean curvature vector), which provides an answer to Problem \ref{prob11}. We carefully distinguish the leaf-wise $\delta$-invariant of a factor (used in the bound) from the intrinsic Chen invariant of the same factor, the two being related, on the totally real factor, by the Bishop--O'Neill formula. The bound is sharp and is uniform in the sign of the holomorphic sectional curvature $c$. As a geometric application, we derive necessary conditions for the immersed CR-warped product submanifold to be minimal in a complex space form, providing a partial answer to a well-known problem proposed by S.\,S.\,Chern (Problem \ref{prob3}). For further research directions, we address a couple of open problems (Problem \ref{prob6} and Problem \ref{pqm2}).

\medskip
\noindent\textit{AMS Subject Classification (2010)}: 53C15; 53C40; 53C42; 53B25.

\noindent\textit{Keywords}: Mean curvature vector; $\delta$-invariant; scalar curvature; $CR$-warped products; minimal submanifolds; complex space forms.
\end{abstract}

\sloppy
\section{Introduction}
Warped products play important roles in general relativity, where they provide useful mathematical models of spacetime: the Robertson--Walker spacetime, the Friedmann cosmological models and the standard static spacetime are all warped products. Warped products also provide natural settings to model spacetime near black holes; e.g.\ the Schwarzschild spacetime admits a warped product construction \cite{abd}.

Extrinsic and intrinsic Riemannian invariants have wide applications in many fields of science other than differential geometry; in particular they are of considerable significance in general relativity \cite{2ndi}. Among extrinsic invariants, the shape operator and the squared mean curvature are central; among intrinsic invariants, sectional, Ricci and scalar curvatures together with Chen's $\delta$-invariant are the most prominent. Motivated by the Nash embedding theorem, the program of B.--Y.\ Chen is to search for sharp control of extrinsic quantities by intrinsic ones, and vice versa \cite{aallr4,yyhh88}. This was the original motivation behind the following problem.

\begin{problem}\label{prob11}
\cite{66449d} Establish simple relationships between the main extrinsic invariants and the main intrinsic invariants of a submanifold.
\end{problem}

Several famous results in differential geometry, such as the isoperimetric inequality, Chern--Lashof's inequality and the Gauss--Bonnet theorem, can be regarded as results in this respect.

It is well known \cite{ddyy7} that the following two conditions are necessary for the immersion to be minimal in Euclidean space $\mathbb{E}^m$:

\smallskip
\noindent\textbf{Condition 1:} If $\varphi: M^n\rightarrow \mathbb{E}^m$ is a minimal immersion from a manifold of positive dimension into a Euclidean $m$-space, then $M^n$ is non-compact.

\smallskip
\noindent\textbf{Condition 2:} If $\varphi: M^n\rightarrow \mathbb{E}^m$ is a minimal immersion from a manifold of positive dimension into a Euclidean $m$-space, then the Ricci tensor of $M^n$ is negative semi-definite.

\smallskip
S.\,S.\,Chern asked on page 13 of \cite{CU} to search for further necessary conditions on the Riemannian metric of a submanifold $M^n$ in order to admit an isometric minimal immersion into a Euclidean space. Later, Chen materialized this goal for warped product submanifolds in the following form.

\begin{problem}\label{prob3}
\cite{aallr4} Given a warped product $N_1\times_f N_2$, what are the necessary conditions for the warped product to admit a minimal isometric immersion in a Euclidean $m$-space $\mathbb{E}^m$ (or in $\tilde M^{2m}(c)$)?
\end{problem}

In \cite{ddyy7}, Chen initiated a significant inequality in terms of the intrinsic $\delta$-invariant. This celebrated inequality drew the attention of several authors \cite{yyhh88}. Motivated by Chen's results, we construct a new general inequality in terms of the $\delta$-invariant, this time for CR-warped product submanifolds of complex space forms. As we will see, the natural quantity controlled by the inequality is the \emph{leaf-wise} $\delta$-invariant of the factor (Definition \ref{def:hatdelta}), which for the holomorphic factor $N_T$ coincides with its intrinsic Chen invariant, while for the totally real factor $N_\perp$ is related to its intrinsic Chen invariant by the Bishop--O'Neill formula (Proposition \ref{prop:bishop-oneill-delta}).

We first recall the following algebraic lemma from \cite{ddyy7}.

\begin{lemma}\label{first lemma}
Let $\alpha_1, \alpha_2,\ldots,\alpha_n, \beta$ be $(n+1)$ real numbers ($n\geq 2$) such that
\[
\Bigl(\sum_{i=1}^{n}\alpha_i\Bigr)^{\!2} \;=\; (n-1)\Bigl(\sum_{i=1}^{n}\alpha^2_i+\beta\Bigr).
\]
Then $2\alpha_1\alpha_2\geq\beta$, with equality if and only if $\alpha_1+\alpha_2= \alpha_3=\dots=\alpha_n$.
\end{lemma}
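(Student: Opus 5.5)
The plan is to treat the hypothesis as a quadratic constraint and apply Cauchy--Schwarz to a well-chosen set of $n-1$ numbers. Regroup the $\alpha$'s into the $n-1$ quantities
\[
\gamma_1=\alpha_1+\alpha_2,\qquad \gamma_2=\alpha_3,\ \dots,\ \gamma_{n-1}=\alpha_n,
\]
so that $\sum_{i=1}^n\alpha_i=\sum_{j=1}^{n-1}\gamma_j$. The Cauchy--Schwarz inequality for $n-1$ real numbers then gives
\[
\Bigl(\sum_{i=1}^{n}\alpha_i\Bigr)^{2}=\Bigl(\sum_{j=1}^{n-1}\gamma_j\Bigr)^{2}\le (n-1)\sum_{j=1}^{n-1}\gamma_j^2 .
\]

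Next we expand the right-hand side, using $\gamma_1^2=\alpha_1^2+\alpha_2^2+2\alpha_1\alpha_2$:
\[
\sum_{j=1}^{n-1}\gamma_j^2=\sum_{i=1}^n\alpha_i^2+2\alpha_1\alpha_2 .
\]
We substitute the hypothesis $\bigl(\sum\alpha_i\bigr)^2=(n-1)\bigl(\sum\alpha_i^2+\beta\bigr)$ on the left. This yields
\[
(n-1)\Bigl(\sum_{i=1}^n\alpha_i^2+\beta\Bigr)\le (n-1)\Bigl(\sum_{i=1}^n\alpha_i^2+2\alpha_1\alpha_2\Bigr).
\]
Since $n\ge 2$, the factor $n-1$ is positive and can be divided out, leaving $\beta\le 2\alpha_1\alpha_2$.

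For the equality case, note that every step other than Cauchy--Schwarz is an identity. Hence $2\alpha_1\alpha_2=\beta$ holds exactly when Cauchy--Schwarz is an equality, that is, when all the $\gamma_j$ coincide: $\alpha_1+\alpha_2=\alpha_3=\dots=\alpha_n$. The degenerate case $n=2$ needs a quick check. There is then only one $\gamma$, Cauchy--Schwarz is a trivial equality, and the hypothesis reads $(\alpha_1+\alpha_2)^2=\alpha_1^2+\alpha_2^2+\beta$, i.e.\ $\beta=2\alpha_1\alpha_2$. This is consistent with the equality condition, which is then vacuous.

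There is no real obstacle here. The only point requiring care is the choice of grouping, namely merging $\alpha_1$ and $\alpha_2$ so that the cross term $2\alpha_1\alpha_2$ appears, and tracking the equality condition of Cauchy--Schwarz.
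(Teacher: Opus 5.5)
Your proof is correct and complete. With $\gamma_1=\alpha_1+\alpha_2$ and $\gamma_j=\alpha_{j+1}$, the hypothesis makes $(n-1)(2\alpha_1\alpha_2-\beta)$ exactly equal to the Cauchy--Schwarz defect $(n-1)\sum_{j}\gamma_j^2-\bigl(\sum_{j}\gamma_j\bigr)^2$, which gives both the inequality and the equality characterization, including the case $n=2$. The paper does not prove this lemma and only recalls it from Chen's 1993 work, so there is no in-paper argument to compare; your regrouping argument is the standard proof of it.
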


We also recall the definition of the Chen first invariant, which is the main intrinsic invariant in our inequality:
\begin{equation}\label{eq:delta-def}
\delta_{\tilde{M}^{m}}(x)\;=\; \tilde{\tau}(T_{x}\tilde{M}^m)-\inf\bigl\{\tilde{K}(\pi):\pi\subset T_{x}\tilde{M}^{m},\ \dim \pi=2\bigr\},\qquad x\in\tilde M^m.
\end{equation}
The following theorem was first proved for Riemannian submanifolds in real space forms by Chen in \cite{ddyy7}. It is known as the \emph{Chen first inequality}. Since its publication, it has been extended to Riemannian submanifolds in various ambient spaces \cite{55kk99}; however, it had not been proved for CR-warped product submanifolds in any complex ambient manifold. Thus, this work is devoted to proving such an inequality in the setting of CR-warped product submanifolds of complex space forms.

The classical Chen first inequality reads as follows:

\begin{theorem}[Chen 1993, \cite{ddyy7}]\label{thm:chen-original}
Let $M^n$ ($n\geq 3$) be an $n$-dimensional submanifold of a real space form $\tilde{M}^{m}(c)$ of constant sectional curvature $c$. Then
\begin{equation}\label{eq:chen-original}
\delta_M(x) \;\le\; \frac{n^2(n-2)}{2(n-1)}\|\vec H\|^2 + \frac{(n+1)(n-2)}{2}\,c,
\end{equation}
where $\delta_M(x)=\tau(T_xM^n)-\inf K(\pi)$. Equivalently,
\[
\inf K \;\geq\; \tau(T_{x}M^{n})\;-\;\frac{n^{2}(n-2)}{2(n-1)}\|\vec H\|^2\;-\;\frac{(n+1)(n-2)}{2}\,c.
\]
Equality holds if and only if, with respect to a suitable orthonormal frame $e_1,\ldots,e_n,e_{n+1},\ldots,e_m$, the shape operators of $M^n$ in $\tilde M^m(c)$ take the forms
\[
A_{e_{n+1}}=\begin{pmatrix}
\mu_1 & 0 & 0 & \cdots & 0\\ 0 & \mu_2 & 0 &\cdots & 0\\ 0 & 0 & \mu & \cdots & 0\\ \vdots & \vdots & \vdots & \ddots & \vdots\\ 0 & 0 & 0 & 0 & \mu
\end{pmatrix},\quad \mu=\mu_1+\mu_2,
\]
\[
A_{e_{r}}= \begin{pmatrix}
h^{r}_{11}&h^{r}_{12}&0&\cdots&0\\h^{r}_{12}&-h^{r}_{11}&0&\cdots&0\\0&0&0&\cdots&0\\\vdots&\vdots&\vdots&\ddots&\vdots\\0&0&0&0&0
\end{pmatrix}, \quad r=n+2,\ldots,m.
\]
\end{theorem}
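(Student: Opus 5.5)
The plan is the classical argument: use the Gauss equation to express the scalar curvature through the second fundamental form, then apply Lemma \ref{first lemma} to the shape operator in the mean curvature direction.

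\textbf{Step 1 (Gauss equation).} Fix $x\in M^n$ and a $2$-plane $\pi\subset T_xM^n$. Choose an orthonormal tangent frame $e_1,\dots,e_n$ with $\pi=\mathrm{span}\{e_1,e_2\}$. If $\vec H(x)\neq 0$, choose the normal frame so that $e_{n+1}$ is parallel to $\vec H$; if $\vec H(x)=0$, take any normal frame. Write $h^r_{ij}=\langle h(e_i,e_j),e_r\rangle$. Summing the Gauss equation over pairs $i<j$ gives
\[
2\tau = n(n-1)c + n^2\|\vec H\|^2 - \|h\|^2 .
\]
Set
\[
\varepsilon = 2\tau - \frac{n^2(n-2)}{n-1}\|\vec H\|^2 - n(n-1)c .
\]
Then
\[
n^2\|\vec H\|^2 = (n-1)(\|h\|^2+\varepsilon).
\]
Since $\vec H\parallel e_{n+1}$, the left side equals $\bigl(\sum_i h^{n+1}_{ii}\bigr)^2$.

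\textbf{Step 2 (algebraic lemma).} Apply Lemma \ref{first lemma} with $\alpha_i=h^{n+1}_{ii}$ and
\[
\beta=\varepsilon+\sum_{i\neq j}(h^{n+1}_{ij})^2+\sum_{r>n+1}\sum_{i,j}(h^r_{ij})^2 .
\]
The lemma gives $2h^{n+1}_{11}h^{n+1}_{22}\ge\beta$. The Gauss equation for $\pi$ reads
\[
K(\pi)=c+h^{n+1}_{11}h^{n+1}_{22}-(h^{n+1}_{12})^2+\sum_{r>n+1}\bigl(h^r_{11}h^r_{22}-(h^r_{12})^2\bigr).
\]
Substitute the bound on $h^{n+1}_{11}h^{n+1}_{22}$ and discard the nonnegative squared terms, namely the entries $h^{n+1}_{1j},h^{n+1}_{2j}$ for $j>2$, the terms $(h^r_{ij})^2$ with $i$ or $j>2$, and $(h^r_{11}+h^r_{22})^2$. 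This leaves
\[
K(\pi)\ge c+\tfrac{\varepsilon}{2}.
\]
Taking the infimum over $\pi$ and unwinding $\varepsilon$ gives exactly \eqref{eq:chen-original}, because
\[
\tfrac{n(n-1)}{2}c - c = \tfrac{(n+1)(n-2)}{2}c .
\]
If $\vec H(x)=0$, the same computation works with $\beta=\varepsilon+(\text{squares})$, since then $\sum_i\alpha_i=0$ in every normal direction. Alternatively, one can argue directly from $\sum_{i>2}h^r_{ii}=-(h^r_{11}+h^r_{22})$.

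\textbf{Step 3 (equality).} Equality forces every discarded term to vanish. Hence $h^{n+1}_{ij}=0$ for $i\neq j$ outside the $\{1,2\}$ block, and $h^{n+1}_{12}=0$ after rotating within $\pi$. Also $h^r_{ij}=0$ unless $i,j\in\{1,2\}$, and $h^r_{11}+h^r_{22}=0$ for $r>n+1$. The equality case of Lemma \ref{first lemma} gives $h^{n+1}_{11}+h^{n+1}_{22}=h^{n+1}_{33}=\dots=h^{n+1}_{nn}$. These are precisely the stated matrix forms, and the converse follows by direct substitution.

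\textbf{Main obstacle.} The step needing the most care is the bookkeeping in Step 2: one must check that every term dropped from $\beta$ has the right sign relative to the expression for $K(\pi)$. The off-diagonal terms involving $e_1,e_2$ require particular attention, since they enter both $\|h\|^2$ and $K(\pi)$. The equality analysis also depends on choosing $e_1,e_2$ to diagonalize $A_{e_{n+1}}|_\pi$, which is justified because the bound holds for every frame of $\pi$.
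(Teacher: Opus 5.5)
Your proof is correct and is Chen's classical argument. The paper does not reprove this theorem (it quotes it from Chen's 1993 paper), but its proof of Theorem~\ref{maintheorem} follows the same template: $\Upsilon_1$ plays the role of your $\varepsilon$, Lemma~\ref{first lemma} is applied to the entries $h^{n+1}_{ii}$, and Lemma~\ref{Second lemma} does exactly the square bookkeeping you identify as the main obstacle. For the equality case, state explicitly that the infimum of $K$ over the compact Grassmannian of $2$-planes at $x$ is attained, so equality in $\delta_M(x)$ yields a plane $\pi$ on which every step of Step~2 is an equality.
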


The present paper is organized as follows. After this introduction, in Section \ref{sec:prelims} we present preliminaries, basic definitions and formulae, including the precise notion of leaf-wise $\delta$-invariant and its relation to the intrinsic one via Bishop--O'Neill. In Section \ref{sec:lemmas} we prove preparatory lemmas useful for the next section. In Section \ref{sec:main} we consider CR-warped products in complex space forms and prove a general inequality involving the (leaf-wise) $\delta$-invariant and the mean curvature vector, as an answer to Problem \ref{prob11}; the bound holds uniformly in the sign of $c$. In Section \ref{sec:chern} we provide solutions to Chern's problem: necessary conditions for an isometric immersion of a CR-warped product to be minimal (Corollaries \ref{cor:N1} and \ref{cor:N2}), in both the leaf-wise and the intrinsic form. In the last section we address two related open problems.

\section{Preliminaries}\label{sec:prelims}
Let $\tilde{M}^{m}$ be a smooth Riemannian manifold (real dimension $m$). If $X,Y\in T_{x}\tilde{M}^{m}$ are linearly independent, the \emph{sectional curvature} of the $2$-plane $\pi$ spanned by $X$ and $Y$ is
\begin{equation}\label{eq:K-def}
\tilde{K}(X\wedge Y)\;=\; \frac{\tilde{g}(\tilde{R}(X,Y)Y,X)}{\tilde{g}(X,X)\tilde{g}(Y,Y)-(\tilde{g}(X,Y))^{2}},
\end{equation}
where $\tilde{g}$ is the Riemannian metric on $\tilde{M}^{m}$. If $\pi$ is spanned by orthonormal vectors $X,Y$ at $x$, the previous formula reduces to
\begin{equation}\label{eq:K-on}
\tilde{K}(\pi)\;=\; \tilde{K}_{\tilde{M}^{m}}(X\wedge Y)\;=\; \tilde{g}(\tilde{R}(X,Y)Y,X).
\end{equation}
The \emph{scalar curvature} of $\tilde M^m$ at $x$ is
\begin{equation}\label{eq:scalar-def}
\tilde{\tau}(T_{x}\tilde{M}^{m})\;=\; \sum_{1\leq i<j\leq m}\tilde{K}_{ij},
\end{equation}
where $\tilde{K}_{ij}=\tilde{K}(e_{i}\wedge e_{j})$. Equivalently,
\begin{equation}\label{eq:scalar-2tau}
2\tilde{\tau}(T_{x}\tilde{M}^{m})\;=\; \sum_{1\leq i\neq j\leq m}\tilde{K}_{ij}.
\end{equation}

\paragraph{Partial scalar curvature and Chen invariants on subspaces.}
For a linear subspace $V\subseteq T_x\tilde M^m$ of dimension $k\geq 2$, the \emph{partial scalar curvature of $\tilde M^m$ on $V$ at $x$} is
\begin{equation}\label{eq:partial-tau}
\tilde\tau(V)\;:=\;\sum_{1\leq i<j\leq k}\tilde K(e_i\wedge e_j),
\end{equation}
where $\{e_1,\dots,e_k\}$ is any orthonormal basis of $V$ (the sum is independent of the basis). The \emph{leaf-wise Chen first invariant of $\tilde M^m$ on $V$} is
\begin{equation}\label{eq:delta-V}
\delta(V)\;:=\;\tilde\tau(V)\;-\;\inf\bigl\{\tilde K(\pi):\pi\subseteq V,\ \dim\pi=2\bigr\}.
\end{equation}
When $V=T_x\tilde M^m$, $\delta(V)=\delta_{\tilde M^m}(x)$ recovers \eqref{eq:delta-def}.

\paragraph{Gradient and Laplacian.}
For a smooth function $\psi$ on $\tilde M^m$, the gradient $\tilde\nabla\psi$ and Laplacian $\Delta\psi$ are
\begin{equation}\label{eq:grad-lap}
\tilde{g}(\tilde{\nabla}\psi,X)\;=\; X(\psi),\qquad \Delta\psi\;=\; \sum_{i=1}^{m}\bigl((\tilde{\nabla}_{e_{i}}e_{i})\psi - e_{i}e_{i}\psi\bigr),
\end{equation}
i.e.\ the geometer's sign convention ($\Delta = -\mathrm{div}\,\mathrm{grad}$).

\paragraph{Warped products.}
In an attempt to construct manifolds of negative curvature, R.\,L.\,Bishop and B.\,O'Neill \cite{pom} introduced warped product manifolds. Let $N_1,N_2$ be two Riemannian manifolds with metrics $g_{N_1}, g_{N_2}$, and let $f>0$ be a smooth function on $N_1$. The warped product $M:=N_1\times_f N_2$ is the product manifold $N_1\times N_2$ equipped with the metric $g_M = g_{N_1}+f^2 g_{N_2}$. The warped product is \emph{trivial} if $f$ is constant. For a non-trivial warped product, $\mathfrak D_1$ and $\mathfrak D_2$ denote the distributions tangent to the leaves and fibers respectively.

\paragraph{Bishop--O'Neill formulas.}
For $X,Y$ tangent to $N_1$ and $V,W$ tangent to $N_2$, the curvature of $M$ satisfies
\begin{equation}\label{eq:BO-RXY}
R^M(X,Y)Z\;=\;R^{N_1}(X,Y)Z\quad\text{if }X,Y,Z\in TN_1,
\end{equation}
and, for unit $V,W\in TN_2$ (i.e.\ $g_M$-orthonormal in $TN_2$, equivalently $\tilde V:=fV,\ \tilde W:=fW$ are $g_{N_2}$-orthonormal at the corresponding point of $N_2$),
\begin{equation}\label{eq:BO-fiber-K}
K^M(V\wedge W)\;=\;\frac{1}{f^2}\,K^{N_2}(\tilde V\wedge\tilde W)\;-\;\frac{\|\nabla f\|^2}{f^2}.
\end{equation}
The leaves $N_1\times\{q\}$ are totally geodesic in $M$, while the fibers $\{p\}\times N_2$ are totally umbilical in $M$ with mean curvature vector $-\nabla\ln f$.

Let $\{e_1,\ldots,e_{n_1},e_{n_1+1},\ldots,e_n\}$ be a local orthonormal frame of $TM$ with $\{e_1,\ldots,e_{n_1}\}$ tangent to $N_1$ and $\{e_{n_1+1},\ldots,e_n\}$ tangent to $N_2$. The fundamental warped-product identity (see e.g. \cite{aallr4,yyhh88,55kk99}) is
\begin{equation}\label{eq:warp-id}
\sum_{a=1}^{n_1} \sum_{A=n_1+1}^{n} K^M(e_a\wedge e_A)\;=\; \frac{n_2\,\Delta f}{f}.
\end{equation}

\paragraph{Second fundamental form of an isometric immersion.}
For an isometric immersion $\varphi:M^n\to\tilde M^m$ with second fundamental form $h$ and shape operator $A_\xi$,
\begin{equation}\label{eq:shape}
g(A_{\xi}X,Y)=g(h(X,Y),\xi),\qquad X,Y\in\Gamma(TM^n),\ \xi\in\Gamma(T^\perp M^n).
\end{equation}
Choose a local orthonormal frame $\{e_1,\ldots,e_n,e_{n+1},\ldots,e_m\}$ adapted to $\varphi$, with $\{e_1,\ldots,e_n\}$ tangent and $\{e_{n+1},\ldots,e_m\}$ normal to $M^n$. The mean curvature vector is
\begin{equation}\label{eq:H-def}
\vec{H}(x)=\frac{1}{n}\sum_{i=1}^{n} h(e_i,e_i).
\end{equation}
$M^n$ is \emph{minimal} if $\vec H\equiv 0$ and \emph{totally umbilical} if $h(X,Y)=g(X,Y)\vec H$ for all $X,Y$ \cite{2211gg}.

The Gauss equation reads
\begin{equation}\label{eq:gauss}
R(X,Y,Z,W)=\tilde{R}(X,Y,Z,W)+g(h(X,W),h(Y,Z))-g(h(X,Z),h(Y,W)).
\end{equation}
We denote the coefficients of $h$ in the chosen frame by
\begin{equation}\label{eq:hrij}
h^{r}_{ij}=g(h(e_i,e_j),e_r),\qquad i,j\in\{1,\ldots,n\},\ r\in\{n+1,\ldots,m\}.
\end{equation}

From \eqref{eq:hrij}, \eqref{eq:K-on}, \eqref{eq:gauss} we obtain
\begin{equation}\label{eq:K-Gauss}
K(e_i\wedge e_j)=\tilde{K}(e_i\wedge e_j)+\sum_{r=n+1}^{m}\bigl(h^{r}_{ii}h^{r}_{jj}-(h^{r}_{ij})^{2}\bigr).
\end{equation}
Summing over the tangent frame and using \eqref{eq:scalar-2tau},
\begin{equation}\label{eq:gauss-trace}
2\tau(T_x M^n)\;=\;2\tilde{\tau}(T_x M^n)+n^2\|\vec H\|^2-\|h\|^2,
\end{equation}
where $\tilde\tau(T_xM^n)=\sum_{1\leq i<j\leq n}\tilde K(e_i\wedge e_j)$ is the partial scalar curvature of $\tilde M^m$ on $T_xM^n$. More generally, for any subspace $V\subseteq T_xM^n$,
\begin{equation}\label{eq:gauss-trace-V}
2\tau^M(V)\;=\;2\tilde\tau(V)+\sum_r\Bigl[\bigl(\mathrm{tr}_V A^r\bigr)^2-\|A^r|_V\|^2\Bigr],
\end{equation}
where $\tau^M(V)=\sum_{e_i,e_j\in V,\ i<j}K^M(e_i\wedge e_j)$, $A^r=A_{e_r}$, and the trace and squared norm of $A^r|_V$ are taken over an orthonormal basis of $V$.

\paragraph{Complex space forms.}
Let $\tilde M^{2m}$ be a smooth manifold (real dimension $2m$) with almost complex structure $J$ ($J^2=-I$). If $J$ is integrable (the Nijenhuis tensor vanishes), $(\tilde M^{2m},J)$ is a complex manifold. If $\tilde M^{2m}$ also carries a Hermitian metric $\tilde g$ (so $\tilde g(JX,JY)=\tilde g(X,Y)$) and the K\"ahler form $\omega(X,Y)=\tilde g(JX,Y)$ is closed, then $(\tilde M^{2m},J,\tilde g)$ is K\"ahler. Equivalently, $\tilde\nabla J=0$. A K\"ahler manifold of constant holomorphic sectional curvature $c\in\mathbb{R}$ is a \emph{complex space form} $\tilde M^{2m}(c)$. Its Riemannian curvature tensor is
\begin{multline}\label{eq:csf-R}
\tilde{R}(X,Y,Z,W)= \frac{c}{4}\Bigl\{\tilde{g}(X,W)\tilde{g}(Y,Z)-\tilde{g}(X,Z)\tilde{g}(Y,W)\\+\tilde{g}(JX,W)\tilde{g}(JY,Z)-\tilde{g}(JX,Z)\tilde{g}(JY,W)+2\,\tilde{g}(X,JY)\tilde{g}(JZ,W)\Bigr\}.
\end{multline}
From \eqref{eq:csf-R} one computes the sectional curvature of a 2-plane spanned by orthonormal $X,Y$:
\begin{equation}\label{eq:K-csf}
\tilde K(X\wedge Y)\;=\; \frac{c}{4}\bigl(1+3\,\tilde g(JX,Y)^2\bigr).
\end{equation}
In particular, a \emph{holomorphic} 2-plane (i.e.\ $Y=JX$) has $\tilde K=c$, while a 2-plane on which $J$ acts as zero (i.e.\ $JX\perp Y$) has $\tilde K=c/4$. For any 2-plane $\pi$ in $\tilde M^{2m}(c)$,
\begin{equation}\label{eq:K-min-max}
\min\bigl(\tfrac{c}{4},\,c\bigr)\;\leq\;\tilde K(\pi)\;\leq\;\max\bigl(\tfrac{c}{4},\,c\bigr).
\end{equation}
For a $J$-invariant subspace $V$ of even real dimension $\geq 2$, both bounds are attained: the minimum on a non-$J$-invariant 2-plane (which always exists as soon as $\dim V\geq 4$, and for $\dim V=2$ if and only if $V$ is not itself $J$-invariant) and the maximum on a holomorphic 2-plane.

\paragraph{CR submanifolds.}
Following \cite{AKU17}, a submanifold $M^n$ of an almost Hermitian manifold $\tilde M^{2m}$ is called a \emph{CR-submanifold} if $TM^n$ admits an orthogonal decomposition into a holomorphic distribution $\mathfrak D_T$ ($J\mathfrak D_T\subseteq TM^n$) and a totally real distribution $\mathfrak D_\perp$ ($J\mathfrak D_\perp\subseteq T^\perp M^n$):
\begin{enumerate}[\rm (i)]
\item $TM^{n}=\mathfrak{D}_T\oplus\mathfrak{D}_\perp$;
\item $\mathfrak{D}_T$ is $J$-invariant: $J\mathfrak{D}_T=\mathfrak{D}_T$;
\item $\mathfrak{D}_\perp$ is anti-invariant: $J\mathfrak{D}_\perp\subseteq T^\perp M^n$.
\end{enumerate}
Denoting by $\nu$ the maximal $J$-invariant subbundle of $T^\perp M^n$,
\begin{equation}\label{eq:normal-decomp}
T^\perp M^n=F\mathfrak{D}_\perp \oplus \nu.
\end{equation}
A \emph{CR-warped product submanifold} is a warped product $M^n=N_T\times_f N_\perp$, where $N_T$ is holomorphic ($TN_T=\mathfrak D_T$) and $N_\perp$ is totally real ($TN_\perp=\mathfrak D_\perp$). Note that, since $J$ acts as a complex structure on $\mathfrak D_T$, the dimension $n_1=\dim N_T$ is necessarily even.

\paragraph{Ambient scalar curvature of a CR submanifold.}
For a CR submanifold $M^n$ of $\tilde M^{2m}(c)$ with $n_1=\dim\mathfrak D_T$ and $n_2=\dim\mathfrak D_\perp$ ($n=n_1+n_2$), a direct computation from \eqref{eq:K-csf} gives
\begin{equation}\label{eq:tilde-tau}
2\tilde\tau(T_xM^n)\;=\; \frac{c}{4}\bigl[n(n-1)+3n_1\bigr].
\end{equation}
Indeed, choose an orthonormal basis of $T_xM^n$ such that $\{e_1,\ldots,e_{n_1}\}\subset \mathfrak D_T$ with $Je_{2a-1}=e_{2a}$ for $a=1,\ldots,n_1/2$, and $\{e_{n_1+1},\ldots,e_n\}\subset \mathfrak D_\perp$. Then $\tilde g(Je_i,e_j)^2=1$ exactly for the $n_1$ ordered pairs $(2a-1,2a)$ and $(2a,2a-1)$, and vanishes otherwise. Summing \eqref{eq:K-csf} over $i\neq j$ yields \eqref{eq:tilde-tau}.

\begin{remark}\label{rem:tilde-tau-factor}
Equivalently, for the holomorphic factor $T_xN_T$ alone (which is $J$-invariant of dimension $n_1$ and contains $n_1/2$ holomorphic 2-planes from the chosen basis):
\begin{equation}\label{eq:tilde-tau-1}
2\tilde\tau(T_xN_T)\;=\; \frac{c}{4}\bigl[n_1(n_1-1)+3n_1\bigr]\;=\;\frac{c}{4}\,n_1(n_1+2),
\end{equation}
while for the totally real factor $T_xN_\perp$, since $J\mathfrak D_\perp\perp \mathfrak D_\perp$,
\begin{equation}\label{eq:tilde-tau-2}
2\tilde\tau(T_xN_\perp)\;=\; \frac{c}{4}\,n_2(n_2-1).
\end{equation}
\end{remark}

Combining \eqref{eq:gauss-trace} with \eqref{eq:tilde-tau} yields the fundamental identity for a CR submanifold of $\tilde M^{2m}(c)$:
\begin{equation}\label{eq:fundamental}
\;n^2\|\vec H\|^2\;=\;2\tau(T_xM^n)+\|h\|^2-\frac{c}{4}\bigl[n(n-1)+3n_1\bigr].\;
\end{equation}

\paragraph{Leaf-wise $\delta$-invariant of a CR-warped product factor.}
Throughout the rest of the paper, fix a CR-warped product $M^n=N_T\times_f N_\perp$ isometrically immersed in a complex space form $\tilde M^{2m}(c)$. The natural quantity controlled by the inequalities we prove is the following.

\begin{definition}\label{def:hatdelta}
For $x\in M^n$ and $V\in\{T_xN_T,\,T_xN_\perp\}$ with $\dim V\geq 2$, set
\begin{equation}\label{eq:hatdelta}
\hat\delta(V)(x)\;:=\;\tau^M(V)\;-\;\inf\bigl\{K^M(\pi):\pi\subseteq V,\ \dim\pi=2\bigr\}.
\end{equation}
We call $\hat\delta(T_xN_T)$ and $\hat\delta(T_xN_\perp)$ the \emph{leaf-wise first Chen invariants} of $N_T$ and $N_\perp$ at $x$. Throughout, $K^M$ and $\tau^M$ denote sectional and partial scalar curvatures of $M^n$ (\emph{not} of the factors).
\end{definition}

We now relate $\hat\delta(\cdot)$ to the genuinely intrinsic Chen invariants of the factor manifolds.

\begin{proposition}\label{prop:bishop-oneill-delta}
Let $x=(p,q)\in M^n=N_T\times_f N_\perp$. Then:
\begin{enumerate}[\rm(a)]
\item Since the leaf $N_T\times\{q\}$ is totally geodesic in $M$ and the metric induced by $g_M$ on it is $g_{N_T}$,
\begin{equation}\label{eq:identi-NT}
K^M|_{T_xN_T}=K^{N_T}\bigr|_{T_pN_T},\qquad \tau^M(T_xN_T)=\tau^{N_T}(p),\qquad\hat\delta(T_xN_T)(x)=\delta_{N_T}(p).
\end{equation}
\item Since the fiber $\{p\}\times N_\perp$ carries the metric $f(p)^2\,g_{N_\perp}$ in $g_M$, the Bishop--O'Neill formula \eqref{eq:BO-fiber-K} gives, for any orthonormal $V,W\in T_xN_\perp$,
\begin{equation}\label{eq:identi-Nperp-K}
K^M(V\wedge W)\;=\;\frac{1}{f(p)^2}\,K^{N_\perp}(\tilde V\wedge\tilde W)\;-\;\frac{\|\nabla f(p)\|^2}{f(p)^2},
\end{equation}
where $\tilde V=f(p)\,V$, $\tilde W=f(p)\,W$ are $g_{N_\perp}$-orthonormal. Hence
\begin{equation}\label{eq:identi-Nperp-tau}
\tau^M(T_xN_\perp)\;=\;\frac{1}{f(p)^2}\,\tau^{N_\perp}(q)\;-\;\binom{n_2}{2}\frac{\|\nabla f(p)\|^2}{f(p)^2},
\end{equation}
and, taking infima of \eqref{eq:identi-Nperp-K} over 2-planes in $T_xN_\perp$,
\begin{equation}\label{eq:identi-Nperp-delta}
\hat\delta(T_xN_\perp)(x)\;=\;\frac{1}{f(p)^2}\,\delta_{N_\perp}(q)\;-\;\Bigl[\binom{n_2}{2}-1\Bigr]\frac{\|\nabla f(p)\|^2}{f(p)^2}.
\end{equation}
\end{enumerate}
\end{proposition}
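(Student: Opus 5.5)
Part (a) reduces to the fact that a totally geodesic submanifold has the same curvature as its ambient space on its tangent planes. The leaf $N_T\times\{q\}$ is totally geodesic in $M$ and inherits the metric $g_{N_T}$. Applying the Gauss equation \eqref{eq:gauss} to this leaf inside $M$, the second fundamental form term vanishes, so $R^M(X,Y,Z,W)=R^{N_T}(X,Y,Z,W)$ for $X,Y,Z,W$ tangent to the leaf. This is also \eqref{eq:BO-RXY} paired with $W$. Hence $K^M(\pi)=K^{N_T}(\pi)$ for every 2-plane $\pi\subseteq T_xN_T\cong T_pN_T$, and an orthonormal basis for $g_M$ on $T_xN_T$ is orthonormal for $g_{N_T}$. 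Summing over pairs gives $\tau^M(T_xN_T)=\tau^{N_T}(p)$. Since the two sets of 2-planes coincide, with identical curvature values, their infima agree, and subtracting gives $\hat\delta(T_xN_T)(x)=\delta_{N_T}(p)$ from \eqref{eq:hatdelta} and \eqref{eq:delta-def}.

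For part (b), the first step is to rescale orthonormal frames. If $\{V_1,\dots,V_{n_2}\}$ is $g_M$-orthonormal in $T_xN_\perp$, then, because $g_M|_{TN_\perp}=f(p)^2g_{N_\perp}$, the vectors $\tilde V_A=f(p)V_A$ form a $g_{N_\perp}$-orthonormal basis of $T_qN_\perp$. The correspondence $V\mapsto \tilde V$ is a bijection between orthonormal frames, and it preserves 2-planes: $\pi=\mathrm{span}(V,W)$ is the same linear subspace as $\mathrm{span}(\tilde V,\tilde W)$. Then \eqref{eq:identi-Nperp-K} is just \eqref{eq:BO-fiber-K} rewritten at $x=(p,q)$, with $\|\nabla f\|^2$ evaluated at $p$ since $f$ lives on $N_T$. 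Summing \eqref{eq:identi-Nperp-K} over the $\binom{n_2}{2}$ pairs $A<B$ gives $\tau^M(T_xN_\perp)=f(p)^{-2}\tau^{N_\perp}(q)-\binom{n_2}{2}f(p)^{-2}\|\nabla f(p)\|^2$, which is \eqref{eq:identi-Nperp-tau}.

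For the infimum, note that the map $\pi\mapsto K^M(\pi)$ is an affine function of $K^{N_\perp}(\pi)$ with positive slope $f(p)^{-2}$ and a constant shift. Therefore
\[
\inf_{\pi\subseteq T_xN_\perp}K^M(\pi)=\frac{1}{f(p)^2}\inf_{\pi\subseteq T_qN_\perp}K^{N_\perp}(\pi)-\frac{\|\nabla f(p)\|^2}{f(p)^2}.
\]
Subtracting this from \eqref{eq:identi-Nperp-tau}, the $\|\nabla f\|^2$ terms combine with coefficient $-\binom{n_2}{2}+1$. This yields \eqref{eq:identi-Nperp-delta}, with $\delta_{N_\perp}(q)=\tau^{N_\perp}(q)-\inf K^{N_\perp}$.

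No step is a genuine obstacle; the only delicate point is bookkeeping. The normalization in \eqref{eq:BO-fiber-K} must be applied consistently: $V,W$ are unit for $g_M$, and the tilde vectors are unit for $g_{N_\perp}$. The infimum must also be taken over the same set of 2-planes on both sides, which the plane-preserving rescaling guarantees. Finally, the hypothesis $\dim V\ge 2$, i.e.\ $n_2\ge2$, is needed so that 2-planes exist.
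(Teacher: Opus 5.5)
Your proposal is correct and follows the paper's proof essentially step for step. For (a) you use the Gauss equation with vanishing second fundamental form on the totally geodesic leaf $N_T\times\{q\}$. For (b) you apply the Bishop--O'Neill fiber formula to the rescaled orthonormal frame $\tilde V=f(p)V$, sum over the $\binom{n_2}{2}$ pairs, transfer the infimum through the plane-preserving bijection (the shift $\|\nabla f(p)\|^2/f(p)^2$ is constant at fixed $x$), and subtract.
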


\begin{proof}
Part (a) follows from the standard fact that for an isometric, totally geodesic embedding $\iota:L\hookrightarrow(M,g_M)$, the intrinsic curvature of $L$ (in the induced metric) coincides with the restriction of $K^M$ to 2-planes tangent to $L$ (Gauss equation with $h=0$). For the warped product, the leaf $N_T\times\{q\}$ is totally geodesic by Bishop--O'Neill \cite{pom,abd}, and the induced metric on it is $g_{N_T}$.

For (b), \eqref{eq:identi-Nperp-K} is the Bishop--O'Neill formula \eqref{eq:BO-fiber-K} restricted to a 2-plane in $T_xN_\perp$. Summing over a $g_M$-orthonormal basis of $T_xN_\perp$ yields \eqref{eq:identi-Nperp-tau}. Subtracting an infimum (which corresponds, via the bijection $\pi\mapsto\tilde\pi$ between $g_M$-orthonormal and $g_{N_\perp}$-orthonormal 2-planes, to an infimum on $N_\perp$) gives
\[
\inf_{\pi\subset T_xN_\perp}K^M(\pi)\;=\;\frac{1}{f^2}\inf_{\tilde\pi\subset T_qN_\perp}K^{N_\perp}(\tilde\pi)\;-\;\frac{\|\nabla f\|^2}{f^2},
\]
and \eqref{eq:identi-Nperp-delta} follows by subtraction with \eqref{eq:identi-Nperp-tau}.
\end{proof}

\section{Basic Lemmas}\label{sec:lemmas}
This section collects three computational lemmas. The first is the warped-product identity, already stated as \eqref{eq:warp-id}: for $M^n=N_1\times_f N_2$,
\begin{equation}\label{eq:warp-id-bis}
\sum_{a=1}^{n_1}\sum_{A=n_1+1}^{n}K^M(e_{a}\wedge e_A)=\frac{n_2\Delta f}{f}.
\end{equation}

\begin{lemma}\label{Second lemma}
Let $\varphi$ be an isometric immersion of a warped product $M^{n}=N_1\times_f N_2$ into a Riemannian manifold $\tilde{M}^{m}$. Then
\begin{multline}\label{eq:lemma2}
\tfrac{1}{2}\!\!\sum_{\substack {i,j=1\\i\neq j}}^{n}\!(h_{ij}^{n+1})^2+\tfrac{1}{2}\sum_{r=n+2}^{m}\sum_{i,j=1}^{n}(h_{ij}^{r})^2+\sum_{r=n+2}^{m} h_{11}^{r}h_{22}^{r}-\!\!\sum_{r=n+1}^{m}\!\!(h_{12}^{r})^2\\
=\;\tfrac{1}{2}\!\!\sum_{\substack {i,j=3\\i\neq j}}^{n}\!(h_{ij}^{n+1})^2+\tfrac{1}{2}\sum_{r=n+2}^{m}\sum_{i,j=3}^{n}(h_{ij}^{r})^2+ \tfrac{1}{2}\sum_{r=n+2}^{m}(h_{11}^{r}+h_{22}^{r})^{2}\\+\sum_{r=n+1}^{m}\sum_{j=3}^{n}\bigl((h_{1j}^{r})^2+(h_{2j}^{r})^2\bigr).
\end{multline}
\end{lemma}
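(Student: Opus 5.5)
The identity in Lemma \ref{Second lemma} is purely algebraic. It uses only the symmetry $h^r_{ij}=h^r_{ji}$ of the second fundamental form and does not depend on the warped product structure or on the ambient curvature. I would therefore prove it by splitting each sum on the left-hand side according to whether the indices lie in $\{1,2\}$ or in $\{3,\dots,n\}$, and then matching the pieces with the right-hand side. I would treat the normal direction $r=n+1$ separately from the directions $r\geq n+2$, since they enter the left-hand side differently: only off-diagonal terms for $e_{n+1}$, but full sums plus the mixed product $h^r_{11}h^r_{22}$ for $r\geq n+2$.

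\emph{Step 1 (the direction $e_{n+1}$).} I would decompose the ordered pairs $(i,j)$ with $i\neq j$ in $\{1,\dots,n\}$ into three groups.
\begin{enumerate}[\rm (i)]
\item Pairs with $i,j\geq 3$. These give exactly the term $\tfrac12\sum_{i\neq j\geq 3}(h^{n+1}_{ij})^2$ on the right-hand side.
\item The pairs $(1,2)$ and $(2,1)$. By symmetry they contribute $\tfrac12\cdot 2(h^{n+1}_{12})^2=(h^{n+1}_{12})^2$, which cancels the $r=n+1$ term of $-\sum_{r}(h^r_{12})^2$.
\item The pairs $(1,j),(j,1),(2,j),(j,2)$ with $j\geq3$. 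By symmetry they contribute $\sum_{j\geq3}\bigl((h^{n+1}_{1j})^2+(h^{n+1}_{2j})^2\bigr)$, which is the $r=n+1$ part of the last sum on the right-hand side.
\end{enumerate}

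\emph{Step 2 (the directions $r\geq n+2$).} I would split $\tfrac12\sum_{i,j=1}^n(h^r_{ij})^2$ in the same way, now also keeping the diagonal.
\begin{enumerate}[\rm (i)]
\item The block $i,j\geq3$ gives $\tfrac12\sum_{i,j\geq3}(h^r_{ij})^2$.
\item The diagonal entries $(1,1),(2,2)$ give $\tfrac12\bigl((h^r_{11})^2+(h^r_{22})^2\bigr)$. Adding $h^r_{11}h^r_{22}$ completes the square to $\tfrac12(h^r_{11}+h^r_{22})^2$.
\item The off-diagonal pair $(1,2),(2,1)$ gives $(h^r_{12})^2$, which cancels against $-(h^r_{12})^2$.
\item The mixed entries with one index in $\{1,2\}$ and the other $\geq3$ give $\sum_{j\geq3}\bigl((h^r_{1j})^2+(h^r_{2j})^2\bigr)$.
\end{enumerate}

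\emph{Step 3.} Summing Steps 1 and 2 over $r$ reproduces the right-hand side of \eqref{eq:lemma2} term by term. There is no real obstacle in this argument. The only point needing care is the bookkeeping of the factor $\tfrac12$ against the two ordered pairs $(i,j),(j,i)$, since this is where symmetry of $h$ is used. I would also make sure that the diagonal terms $h^{n+1}_{11},h^{n+1}_{22}$ never appear, which holds because the first sum excludes $i=j$.
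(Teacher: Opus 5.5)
Your proposal is correct and follows essentially the same route as the paper. Both split $\{1,\dots,n\}=\{1,2\}\cup\{3,\dots,n\}$, expand the squared sums using $h^r_{ij}=h^r_{ji}$, complete the square $h^r_{11}h^r_{22}+\tfrac12((h^r_{11})^2+(h^r_{22})^2)=\tfrac12(h^r_{11}+h^r_{22})^2$ for $r\geq n+2$, and cancel the $(h^r_{12})^2$ terms; your separate treatment of $r=n+1$ and $r\geq n+2$ just organizes the same bookkeeping more explicitly.
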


\begin{proof}
Split the index sets $\{1,\ldots,n\}=\{1,2\}\cup\{3,\ldots,n\}$. The first two summands on the left expand as
\begin{align*}
\tfrac{1}{2}\!\!\sum_{\substack {i,j=1\\i\neq j}}^{n}\!(h_{ij}^{n+1})^2&=\tfrac{1}{2}\!\!\sum_{\substack {i,j=3\\i\neq j}}^{n}\!(h_{ij}^{n+1})^2+(h_{12}^{n+1})^2+\sum_{j=3}^{n}\bigl((h_{1j}^{n+1})^2+(h_{2j}^{n+1})^2\bigr),\\
\tfrac{1}{2}\sum_{r=n+2}^{m}\sum_{i,j=1}^{n}(h_{ij}^{r})^{2}&=\tfrac{1}{2}\sum_{r=n+2}^{m}\sum_{i,j=3}^{n}(h_{ij}^{r})^{2}+\sum_{r=n+2}^{m}\sum_{j=3}^{n}\bigl((h_{1j}^{r})^2+(h_{2j}^r)^2\bigr)\\
&\quad + \sum_{r=n+2}^{m}(h_{12}^{r})^{2}+\tfrac{1}{2}\sum_{r=n+2}^{m}\bigl((h_{11}^{r})^2+(h_{22}^{r})^{2}\bigr).
\end{align*}
Using
\[
\sum_{r=n+2}^{m}h_{11}^{r}h_{22}^{r}+\tfrac{1}{2}\!\!\sum_{r=n+2}^{m}\!\!\bigl((h_{11}^r)^{2}+ (h_{22}^{r})^{2}\bigr)=\tfrac{1}{2}\!\!\sum_{r=n+2}^{m}\!\!\bigl(h_{11}^r+h_{22}^{r}\bigr)^{2},
\]
the identity follows by direct substitution; the term $(h_{12}^{n+1})^2+\sum_{r=n+2}^m(h_{12}^r)^2-\sum_{r=n+1}^m(h_{12}^r)^2=0$ cancels.
\end{proof}

\begin{lemma}\label{Third lemma}
Let $\varphi$ be an isometric immersion of a warped product $M^n=N_1\times_f N_2$ into a Riemannian manifold $\tilde{M}^m$. Then
\begin{multline}\label{eq:lemma3}
\tfrac{1}{2}\!\!\sum_{\substack {i,j=3\\i\neq j}}^{n}\!(h_{ij}^{n+1})^2+\tfrac{1}{2}\sum_{r=n+2}^{m}\sum_{i,j=3}^{n}(h_{ij}^r)^2+\sum_{r=n+1}^{m}\sum_{j=3}^{n}\bigl((h_{1j}^{r})^2+(h_{2j}^r)^2\bigr)\\
=\tfrac{1}{2}\!\!\sum_{\substack {a,b=3\\a\neq b}}^{n_1}\!(h_{ab}^{n+1})^2+\tfrac{1}{2}\!\!\!\sum_{\substack {A,B=n_1+1\\A\neq B}}^{n}\!\!\!(h_{AB}^{n+1})^2+\tfrac{1}{2}\sum_{r=n+2}^{m}\sum_{a,b=3}^{n_1}(h_{ab}^r)^2\\
+\tfrac{1}{2}\sum_{r=n+2}^{m}\sum_{A,B=n_1+1}^{n}(h_{AB}^r)^2+\sum_{r=n+1}^{m}\sum_{a=3}^{n_1}\bigl((h_{1a}^r)^2+(h_{2a}^r)^2\bigr)\\
+\sum_{r=n+1}^{m}\sum_{a=3}^{n_1}\sum_{A=n_1+1}^{n}(h_{aA}^r)^2.
\end{multline}
\end{lemma}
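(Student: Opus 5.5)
The plan is to prove the identity by pure index bookkeeping, exactly as in Lemma \ref{Second lemma}, but now splitting the index set $\{3,\ldots,n\}$ along the warped-product decomposition. Here I take the frame adapted to $M^n=N_1\times_f N_2$: $e_1,e_2,e_3,\ldots,e_{n_1}$ tangent to $N_1$ (so implicitly $n_1\ge 2$) and $e_{n_1+1},\ldots,e_n$ tangent to $N_2$. I write $a,b\in\{3,\ldots,n_1\}$ and $A,B\in\{n_1+1,\ldots,n\}$, so that
\[
\{3,\ldots,n\}=\{3,\ldots,n_1\}\cup\{n_1+1,\ldots,n\}.
\]
No curvature input is used, only the symmetry $h^r_{ij}=h^r_{ji}$.

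I will expand the three groups on the left-hand side one at a time.

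\textbf{First group.} The off-diagonal sum $\tfrac12\sum_{i\neq j\ge 3}(h^{n+1}_{ij})^2$ splits into three pieces:
\begin{itemize}
\item the $N_1$-block $\tfrac12\sum_{a\neq b}(h^{n+1}_{ab})^2$;
\item the $N_2$-block $\tfrac12\sum_{A\neq B}(h^{n+1}_{AB})^2$;
\item the mixed pairs $(a,A)$ and $(A,a)$, which by symmetry contribute $\sum_{a,A}(h^{n+1}_{aA})^2$.
\end{itemize}

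\textbf{Second group.} For each $r\ge n+2$, the full sum $\tfrac12\sum_{i,j\ge 3}(h^r_{ij})^2$ (diagonal included) splits in the same way. This gives the two diagonal blocks $\tfrac12\sum_{a,b}$ and $\tfrac12\sum_{A,B}$, together with the mixed term $\sum_{a,A}(h^r_{aA})^2$. Adding the mixed terms from the first two groups yields exactly $\sum_{r=n+1}^m\sum_{a,A}(h^r_{aA})^2$, which is the last term on the right of \eqref{eq:lemma3}.

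\textbf{Third group.} The sum $\sum_{r}\sum_{j=3}^n\bigl((h^r_{1j})^2+(h^r_{2j})^2\bigr)$ splits into $j=a\in\{3,\ldots,n_1\}$, which reproduces the term $\sum_r\sum_a\bigl((h^r_{1a})^2+(h^r_{2a})^2\bigr)$ on the right, and $j=A$.

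The step I expect to be the real obstacle is precisely this leftover piece
\[
\sum_{r=n+1}^m\sum_{A=n_1+1}^n\bigl((h^r_{1A})^2+(h^r_{2A})^2\bigr),
\]
which has no counterpart on the right-hand side of \eqref{eq:lemma3} as stated. Pure bookkeeping therefore gives
\[
\text{LHS}=\text{RHS}+\sum_{r=n+1}^m\sum_{A=n_1+1}^n\bigl((h^r_{1A})^2+(h^r_{2A})^2\bigr).
\]
So I would first check whether the intended statement either contains this term, or is meant with $\ge$ in place of $=$. The latter holds trivially, since the extra term is a sum of squares.

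For a genuine equality one would need $h(e_1,e_A)=h(e_2,e_A)=0$. That is not available for a general immersion into an arbitrary $\tilde M^m$. Moreover, for CR-warped products in a Kähler ambient it typically fails, because $g(h(JX,Z),JW)=(X\ln f)\,g(Z,W)$. Hence I would present the computation as an identity including the residual term, and then note that in the subsequent application (the lower bound feeding into Lemma \ref{first lemma}) only the resulting inequality, obtained by discarding the nonnegative residual, is needed.
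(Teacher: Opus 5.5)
Your bookkeeping is correct, and it follows exactly the paper's route. The paper's proof splits each of the three groups along $\{3,\dots,n\}=\{3,\dots,n_1\}\sqcup\{n_1+1,\dots,n\}$ and merges the $r=n+1$ and $r\geq n+2$ mixed terms $h^r_{aA}$ ($a\geq 3$) just as you do.

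The leftover you isolate is a genuine defect of the printed statement, not of your argument. The paper's own third display writes out $\sum_{r=n+1}^{m}\sum_{A=n_1+1}^{n}\bigl((h^r_{1A})^2+(h^r_{2A})^2\bigr)$ explicitly. The concluding ``combining'' step then drops it without comment; its only auxiliary identity concerns $h^r_{aA}$ with $a\geq 3$. Since the identity is purely algebraic in the symmetric array $h^r_{ij}$, any configuration with $h(e_1,e_A)\neq 0$ or $h(e_2,e_A)\neq 0$ refutes it. Here is a concrete counterexample:
\begin{itemize}
\item Take $n_1=2$, $n_2=1$, $m=4$ and the flat (trivially warped) product $\mathbb{R}^2\times\mathbb{R}$ with coordinates $(x_1,x_2;x_3)$.
\item Immerse it isometrically in $\mathbb{E}^4$ by $(x_1,\,t,\,\gamma(s))$, where $s=(x_2+x_3)/\sqrt{2}$, $t=(x_3-x_2)/\sqrt{2}$, and $\gamma$ is a unit-speed plane curve of curvature $\kappa\neq 0$.
\item Every sum on the right-hand side is empty. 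The left-hand side reduces to $(h^4_{13})^2+(h^4_{23})^2=0+\kappa^2/4\neq 0$.
\end{itemize}

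So what is actually proved is your version. The left-hand side equals the right-hand side plus $\sum_{r}\sum_{A}\bigl((h^r_{1A})^2+(h^r_{2A})^2\bigr)$, hence is $\geq$ it, with equality if and only if $h(e_1,e_A)=h(e_2,e_A)=0$ for all $A$.

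You are also right that the inequality is all the later argument needs. In \eqref{eq:F6} these terms sit on the lower-bound side for $K^M(\pi_1)$, so replacing them by the smaller right-hand side preserves the inequality. For the equality discussion, the residual must vanish in addition.

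Your K\"ahler remark is Chen's identity for CR-warped products. Applied with $e_A$ and $Je_A$ unit, it gives a residual $\geq n_2\bigl(((Je_1)\ln f)^2+((Je_2)\ln f)^2\bigr)$. So in the paper's own setting the dropped term is generically nonzero whenever $f$ is non-constant.
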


\begin{proof}
Split $\{3,\ldots,n\}=\{3,\ldots,n_1\}\cup\{n_1+1,\ldots,n\}$, so that
\begin{align*}
\tfrac{1}{2}\!\!\sum_{\substack {i,j=3\\i\neq j}}^{n}\!(h_{ij}^{n+1})^2&=\tfrac{1}{2}\!\!\sum_{\substack {a,b=3\\a\neq b}}^{n_1}\!(h_{ab}^{n+1})^2+\tfrac{1}{2}\!\!\sum_{\substack {A,B=n_1+1\\A\neq B}}^{n}\!\!(h_{AB}^{n+1})^2+\!\!\sum_{a=3}^{n_1}\sum_{A=n_1+1}^{n}\!\!(h_{aA}^{n+1})^2,\\
\tfrac{1}{2}\sum_{r=n+2}^{m}\sum_{i,j=3}^{n}(h_{ij}^r)^2&=\tfrac{1}{2}\sum_{r=n+2}^{m}\sum_{a,b=3}^{n_1}(h_{ab}^r)^2+\tfrac{1}{2}\sum_{r=n+2}^{m}\sum_{A,B=n_1+1}^{n}(h_{AB}^r)^2\\
&\quad+\sum_{r=n+2}^{m}\sum_{a=3}^{n_1}\sum_{A=n_1+1}^{n}(h_{aA}^r)^2,\\
\sum_{r=n+1}^{m}\sum_{j=3}^{n}\bigl((h_{1j}^r)^2+(h_{2j}^r)^2\bigr)&=\sum_{r=n+1}^{m}\sum_{a=3}^{n_1}\bigl((h_{1a}^r)^2+(h_{2a}^r)^2\bigr)\\
&\quad+\sum_{r=n+1}^{m}\sum_{A=n_1+1}^{n}\bigl((h_{1A}^r)^2+(h_{2A}^r)^2\bigr).
\end{align*}
Combining and using the identity
\[
\sum_{a=3}^{n_1}\!\sum_{A=n_1+1}^{n}\!(h_{aA}^{n+1})^2+\!\!\sum_{r=n+2}^{m}\sum_{a=3}^{n_1}\sum_{A=n_1+1}^{n}\!\!(h_{aA}^r)^2\;=\;\sum_{r=n+1}^{m}\sum_{a=3}^{n_1}\sum_{A=n_1+1}^{n}(h_{aA}^r)^2
\]
yields \eqref{eq:lemma3}.
\end{proof}

\section{First Chen Inequality for CR-Warped Product Submanifolds}\label{sec:main}

For the statement of the main theorem we abbreviate
\begin{equation}\label{eq:Kmin-def}
\tilde K_{\min}(V):=\inf\bigl\{\tilde K(\pi):\pi\subseteq V,\ \dim\pi=2\bigr\}
\end{equation}
for a subspace $V\subseteq T_x\tilde M^{2m}(c)$ of real dimension $\geq 2$. By \eqref{eq:K-min-max},
\begin{equation}\label{eq:Kmin-csf}
\tilde K_{\min}(V)=\min\bigl(\tfrac c4,\,c\bigr)=\begin{cases}c/4&\text{if }c\geq 0,\\ c&\text{if }c\leq 0,\end{cases}
\end{equation}
whenever $V$ is $J$-invariant of even real dimension $\geq 2$; whereas $\tilde K_{\min}(V)=c/4$ whenever $V$ is totally real (since then no holomorphic 2-plane lies in $V$). In particular
\begin{equation}\label{eq:Kmin-cases}
\tilde K_{\min}(T_xN_T)\;=\;\min\bigl(\tfrac c4,\,c\bigr),\qquad \tilde K_{\min}(T_xN_\perp)\;=\;\tfrac c4.
\end{equation}

\begin{theorem}\label{maintheorem}
Let $\varphi : M^n=N_T\times_{f} N_{\perp}\to \tilde{M}^{2m}(c)$ be an isometric immersion of a CR-warped product submanifold into a complex space form of constant holomorphic sectional curvature $c$. Let $n_1=\dim N_T$ (necessarily even) and $n_2=\dim N_\perp$, with $n=n_1+n_2$. Then for each $x\in M^n$ the following inequalities hold:
\begin{enumerate}[\rm (i)]
\item If $n_1\geq 2$, then 
\begin{equation}\label{eq:main-i}
\;\hat\delta(T_xN_T)(x)\;\leq\;\frac{n^2}{2}\|\vec H\|^2-\frac{n_2\Delta f}{f}+\frac{n_1(n_1+2n_2+2)}{2}\cdot\frac{c}{4}-\tilde K_{\min}(T_xN_T).\;
\end{equation}
\item If $n_2\geq 2$, then
\begin{equation}\label{eq:main-ii}
\;\hat\delta(T_xN_\perp)(x)\;\leq\;\frac{n^2}{2}\|\vec H\|^2-\frac{n_2\Delta f}{f}+\frac{n_2(n_2+2n_1-1)}{2}\cdot\frac{c}{4}-\frac{c}{4}.\;
\end{equation}
\end{enumerate}
Equality holds at $x$ in \eqref{eq:main-i} (resp.\ \eqref{eq:main-ii}) if and only if both of the following are satisfied:
\begin{enumerate}[\rm (E1)]
\item the infimum defining $\hat\delta(T_xN_T)$ (resp.\ $\hat\delta(T_xN_\perp)$) is attained on a 2-plane $\pi_*$ with $\tilde K(\pi_*)=\tilde K_{\min}(T_xN_T)$ (resp.\ $\tilde K(\pi_*)=c/4$, which is automatic);
\item there exist orthonormal bases of $T_xM^n$ and $T_x^\perp M^n$ in which the shape operators take the canonical forms detailed in Theorem \ref{thm:equality} below.
\end{enumerate}
In every such equality case, the CR-warped product is \emph{mixed totally geodesic} in $\tilde M^{2m}(c)$, both $\mathfrak D_T$-minimal and $\mathfrak D_\perp$-minimal, and consequently minimal in $\tilde M^{2m}(c)$ at $x$.
\end{theorem}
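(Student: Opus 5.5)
The plan is to bypass Lemma \ref{first lemma} and decompose the scalar curvature of $M^n$ along the warped-product splitting $TM^n=\mathfrak D_T\oplus\mathfrak D_\perp$, then apply the Gauss equation block by block. Fix $x$ and an adapted orthonormal frame $\{e_1,\dots,e_{n_1}\}\subset\mathfrak D_T$, $\{e_{n_1+1},\dots,e_n\}\subset\mathfrak D_\perp$. Splitting the pairs $i<j$ into the $N_T$-block, the $N_\perp$-block and mixed pairs, and using \eqref{eq:warp-id-bis} for the mixed pairs, gives
\[
\tau(T_xM^n)=\tau^M(T_xN_T)+\tau^M(T_xN_\perp)+\frac{n_2\Delta f}{f}.
\]
For (i), let $\pi=\mathrm{span}\{e_1,e_2\}\subseteq T_xN_T$ be an arbitrary $2$-plane, with the frame rotated so that $e_1,e_2$ span it. Then
\[
\tau^M(T_xN_T)-K^M(\pi)=\tau(T_xM^n)-\tau^M(T_xN_\perp)-\frac{n_2\Delta f}{f}-K^M(\pi).
\]
I will show that the right-hand side is bounded by the right-hand side of \eqref{eq:main-i} for every such $\pi$. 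Taking the supremum over $\pi$ then yields $\hat\delta(T_xN_T)$ on the left.

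Next I substitute the Gauss equation in each of the three curvature terms:
\begin{itemize}
\item \eqref{eq:gauss-trace} for $\tau(T_xM^n)$;
\item \eqref{eq:gauss-trace-V} with $V=T_xN_\perp$ for $\tau^M(T_xN_\perp)$;
\item \eqref{eq:K-Gauss} for $K^M(\pi)$.
\end{itemize}

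The ambient part is $\tilde\tau(T_xM^n)-\tilde\tau(T_xN_\perp)-\tilde K(\pi)$. By \eqref{eq:tilde-tau} and \eqref{eq:tilde-tau-2} this equals
\[
\frac c8\bigl[n(n-1)+3n_1-n_2(n_2-1)\bigr]-\tilde K(\pi)=\frac{n_1(n_1+2n_2+2)}{2}\cdot\frac c4-\tilde K(\pi).
\]
This is at most the constant in \eqref{eq:main-i}, since $\tilde K(\pi)\ge\tilde K_{\min}(T_xN_T)$ by \eqref{eq:Kmin-cases}.

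The extrinsic part is $\frac{n^2}{2}\|\vec H\|^2+Q$, where
\[
Q=-\tfrac12\|h\|^2-\tfrac12\sum_r\bigl[(\mathrm{tr}_{\perp}A^r)^2-\|A^r|_{\perp}\|^2\bigr]-\sum_r\bigl(h^r_{11}h^r_{22}-(h^r_{12})^2\bigr).
\]
In $Q$ the $\mathfrak D_\perp$-block squares cancel against $\|A^r|_\perp\|^2$. Grouping what remains, in the spirit of Lemmas \ref{Second lemma} and \ref{Third lemma}, gives
\[
Q=-\tfrac12\sum_r(h^r_{11}+h^r_{22})^2-\tfrac12\sum_r(\mathrm{tr}_\perp A^r)^2-\sum_r\!\!\sum_{a\le n_1<A}(h^r_{aA})^2-\sum_r\Bigl[\sum_{a=1,2;\,b\ge3}(h^r_{ab})^2+\tfrac12\sum_{a\neq b\ge3}(h^r_{ab})^2+\tfrac12\sum_{a\ge3}(h^r_{aa})^2\Bigr],
\]
where $a,b$ range over $\{1,\dots,n_1\}$ and $A$ over $\{n_1+1,\dots,n\}$. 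Every term is nonpositive, so $Q\le0$, and (i) follows.

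Part (ii) is the mirror argument. I take $\pi\subseteq T_xN_\perp$ and subtract $\tau^M(T_xN_T)$ instead. The ambient constant becomes
\[
\frac c8\bigl[n(n-1)+3n_1-n_1(n_1+2)\bigr]-\frac c4=\frac{n_2(n_2+2n_1-1)}{2}\cdot\frac c4-\frac c4,
\]
using \eqref{eq:tilde-tau-1} and the fact that $\tilde K(\pi)=c/4$ on totally real planes. This is why (E1) is automatic in case (ii). The same sum-of-squares regrouping, with the roles of the two blocks interchanged, shows the extrinsic remainder is nonpositive.

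For the equality case, equality in (i) forces two things. First, $\tilde K(\pi_*)=\tilde K_{\min}(T_xN_T)$ for an infimizing plane $\pi_*$, which is (E1). Second, every square in $Q$ must vanish:
\begin{itemize}
\item $h^r_{11}=-h^r_{22}$ for all $r$;
\item $h^r_{ab}=0$ for all other index pairs in the $N_T$-block except $(1,2)$ and $(2,1)$;
\item $h(\mathfrak D_T,\mathfrak D_\perp)=0$, i.e.\ mixed total geodesy;
\item $\mathrm{tr}_\perp A^r=0$ for all $r$.
\end{itemize}
Hence $\mathrm{tr}_{\mathfrak D_T}h=0$ and $\mathrm{tr}_{\mathfrak D_\perp}h=0$, so $\vec H=0$. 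These conditions give the canonical forms to be recorded in Theorem \ref{thm:equality}, and conversely those forms make all the inequalities equalities.

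I expect the main obstacle to be the bookkeeping that regroups $Q$ into a manifest sum of nonpositive squares. This is where Lemmas \ref{Second lemma} and \ref{Third lemma} are meant to be used. I will check it carefully, since the term $+\sum_r(h^r_{12})^2$ must be exactly absorbed by the off-diagonal $(1,2)$ contribution of $\|h\|^2$, and the cross term $-h^r_{11}h^r_{22}$ must combine with $-\frac12\bigl((h^r_{11})^2+(h^r_{22})^2\bigr)$.
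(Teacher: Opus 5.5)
Your proof is correct, and it takes a genuinely different, more elementary route than the paper. The paper fixes $e_{n+1}\parallel\vec H$ and applies Chen's algebraic Lemma~\ref{first lemma} to $\alpha_a=h^{n+1}_{aa}$ through the auxiliary quantity $\Upsilon_1$. It then rearranges with Lemmas~\ref{Second lemma}--\ref{Third lemma} until a remainder $\Theta(h)\ge 0$ can be dropped. You instead obtain the exact identity
\[
\tau^M(T_xN_T)-K^M(\pi)=\frac{n^2}{2}\|\vec H\|^2-\frac{n_2\Delta f}{f}+\frac{n_1(n_1+2n_2+2)}{2}\cdot\frac c4-\tilde K(\pi)+Q ,
\]
and your regrouping of $Q$ into non-positive squares checks out term by term.

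The two routes are related as follows. Lemma~\ref{first lemma} is just Cauchy--Schwarz for the $n_1-1$ numbers $h^{n+1}_{11}+h^{n+1}_{22},h^{n+1}_{33},\dots,h^{n+1}_{n_1n_1}$. In effect, the paper splits your $r=n+1$ squares $\frac12\bigl[(h^{n+1}_{11}+h^{n+1}_{22})^2+\sum_{a\ge3}(h^{n+1}_{aa})^2\bigr]$ into two pieces: $\frac{1}{2(n_1-1)}\bigl(\sum_a h^{n+1}_{aa}\bigr)^2$, which is kept in $\Theta$, and a Cauchy--Schwarz defect, which is discarded at the lemma step. It then discards $\Theta$ as well. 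Since the stated bound carries the crude coefficient $n^2/2$ on $\|\vec H\|^2$, the lemma adds nothing here beyond continuity with Chen's original proof, where it is needed for the sharp coefficient.

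Your route has three advantages:
\begin{itemize}
\item It needs no distinguished normal direction.
\item The equality analysis happens in one step: the vanishing of each square gives exactly the canonical forms of Theorem~\ref{thm:equality}, including $h^{n+1}_{aa}=0$ and $h^{n+1}_{aA}=0$ for $a\ge 3$. The paper has to reassemble these from the equality case of Lemma~\ref{first lemma}, the trace condition, and the squares dropped in \eqref{eq:F6}.
\item It shows directly why equality forces $\vec H=0$. Cauchy--Schwarz on the $n_1$ trace-type squares in $Q$ gives $-Q\ge \frac{n^2}{2n_1}\|\vec H\|^2$. Incidentally, this means the coefficient $n^2/2$ in \eqref{eq:main-i} could be lowered to $\frac{n^2(n_1-1)}{2n_1}$.
\end{itemize}

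Two small points:
\begin{itemize}
\item The bound $\tilde K(\pi)\ge\tilde K_{\min}(T_xN_T)$ is simply the definition \eqref{eq:Kmin-def}. Cite that rather than \eqref{eq:Kmin-cases}, whose value $\min(c/4,c)$ is wrong when $n_1=2$ and $c>0$, since $T_xN_T$ is then a single holomorphic plane.
\item In the equality discussion, state that an infimizing plane $\pi_*$ exists by compactness of the Grassmannian of $2$-planes in $T_xN_T$.
\end{itemize}
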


\begin{proof}
Fix $x\in M^n$. Choose an orthonormal basis $\{e_1,\ldots,e_{n_1},e_{n_1+1},\ldots,e_n\}$ of $T_xM^n$ with $\{e_1,\ldots,e_{n_1}\}\subset T_xN_T$ and $\{e_{n_1+1},\ldots,e_n\}\subset T_xN_\perp$, and an orthonormal basis $\{e_{n+1},\ldots,e_{2m}\}$ of $T_x^\perp M^n$ with $e_{n+1}$ in the direction of $\vec H$ (if $\vec H(x)\neq 0$; otherwise $e_{n+1}$ is arbitrary). Throughout this proof, indices $a,b$ run in $\{1,\dots,n_1\}$, $A,B$ in $\{n_1+1,\dots,n\}$, and $r,s$ in $\{n+1,\dots,2m\}$.

\medskip
\noindent\textbf{Proof of (i).}\ \ Let $\pi_1=\mathrm{Span}\{e_1,e_2\}\subseteq T_xN_T$ be \emph{any} 2-plane (we do \emph{not} assume $\pi_1$ is non-$J$-invariant). Denote $\tilde K_1:=\tilde K(\pi_1)$. The fundamental identity \eqref{eq:fundamental} reads
\begin{equation}\label{eq:F1}
n^2\|\vec H\|^2 \;=\; 2\tau(T_xM^n)+\|h\|^2-\frac{c}{4}\bigl[n(n-1)+3n_1\bigr].
\end{equation}
Using $\bigl(\sum_a h^{n+1}_{aa}\bigr)^2+\bigl(\sum_A h^{n+1}_{AA}\bigr)^2+2\sum_{a,A}h^{n+1}_{aa}h^{n+1}_{AA}=\bigl(\sum_i h^{n+1}_{ii}\bigr)^2=n^2\|\vec H\|^2$, \eqref{eq:F1} is equivalent to
\begin{multline}\label{eq:F2}
\Bigl(\sum_{a=1}^{n_1}h_{aa}^{n+1}\Bigr)^{\!2}\!=\!2\tau(T_xM^n)+\|h\|^2-\frac{c}{4}\bigl[n(n-1)+3n_1\bigr]\\
-\Bigl(\sum_{A=n_1+1}^{n}h_{AA}^{n+1}\Bigr)^{\!2}-2\!\!\sum_{a=1}^{n_1}\sum_{A=n_1+1}^{n}\!\!h_{aa}^{n+1}h_{AA}^{n+1}.
\end{multline}
Set
\begin{multline}\label{eq:Upsilon1}
\Upsilon_1\;:=\;2\tau(T_xM^n)-\frac{n_1-2}{n_1-1}\Bigl(\sum_{a=1}^{n_1}h_{aa}^{n+1}\Bigr)^{\!2}-\Bigl(\sum_{A=n_1+1}^{n}h_{AA}^{n+1}\Bigr)^{\!2}\\
-2\!\!\sum_{a=1}^{n_1}\sum_{A=n_1+1}^{n}\!\!h_{aa}^{n+1}h_{AA}^{n+1}-\frac{c}{4}\bigl[n(n-1)+3n_1\bigr].
\end{multline}
Substituting \eqref{eq:F2} into \eqref{eq:Upsilon1} (only in the terms not involving $\frac{n_1-2}{n_1-1}(\sum_a)^2$) yields
\begin{equation}\label{eq:F3}
\Bigl(\sum_{a=1}^{n_1}h_{aa}^{n+1}\Bigr)^{\!2}\;=\;(n_1-1)\bigl(\Upsilon_1+\|h\|^2\bigr).
\end{equation}
Expanding $\|h\|^2$ and applying Lemma \ref{first lemma} with $\alpha_a=h^{n+1}_{aa}$ ($a=1,\ldots,n_1$) and
\[
\beta\;=\;\Upsilon_1+\sum_{A=n_1+1}^{n}(h_{AA}^{n+1})^2+\!\!\!\sum_{\substack{i,j=1\\i\neq j}}^{n}\!\!(h_{ij}^{n+1})^2+\sum_{r=n+2}^{2m}\sum_{i,j=1}^{n}(h_{ij}^r)^2,
\]
we obtain
\begin{equation}\label{eq:F4}
h_{11}^{n+1}h_{22}^{n+1}\geq\tfrac{1}{2}\Bigl[\Upsilon_1+\sum_{A=n_1+1}^{n}(h_{AA}^{n+1})^2+\!\!\!\sum_{\substack{i,j=1\\i\neq j}}^{n}\!\!(h_{ij}^{n+1})^2+\sum_{r=n+2}^{2m}\sum_{i,j=1}^{n}(h_{ij}^r)^2\Bigr].
\end{equation}

From \eqref{eq:csf-R}, \eqref{eq:gauss}, and the definition of $\tilde K_1$,
\begin{equation}\label{eq:K-pi1}
K^M(\pi_1)\;=\;\tilde K_1+\sum_{r=n+1}^{2m}\bigl(h_{11}^rh_{22}^r-(h_{12}^r)^2\bigr).
\end{equation}
Substituting \eqref{eq:F4} for the $r=n+1$ contribution,
\begin{multline}\label{eq:F5}
K^M(\pi_1)\;\geq\;\tilde K_1+\tfrac{1}{2}\Upsilon_1+\tfrac{1}{2}\sum_{A=n_1+1}^{n}(h_{AA}^{n+1})^2+\sum_{r=n+2}^{2m}h_{11}^rh_{22}^r-\sum_{r=n+1}^{2m}(h_{12}^r)^2\\
+\tfrac{1}{2}\!\!\sum_{\substack{i,j=1\\i\neq j}}^{n}\!\!(h_{ij}^{n+1})^2+\tfrac{1}{2}\sum_{r=n+2}^{2m}\sum_{i,j=1}^{n}(h_{ij}^r)^2.
\end{multline}
By Lemma \ref{Second lemma}, \eqref{eq:F5} is equivalent to
\begin{multline}\label{eq:F6}
K^M(\pi_1)\;\geq\;\tilde K_1+\tfrac{1}{2}\Upsilon_1+\tfrac{1}{2}\!\!\sum_{\substack{i,j=3\\i\neq j}}^{n}\!\!(h_{ij}^{n+1})^2+\tfrac{1}{2}\sum_{r=n+2}^{2m}\sum_{i,j=3}^{n}(h_{ij}^r)^2\\
+\tfrac{1}{2}\sum_{r=n+2}^{2m}(h_{11}^r+h_{22}^r)^2+\sum_{r=n+1}^{2m}\sum_{j=3}^{n}\bigl((h_{1j}^r)^2+(h_{2j}^r)^2\bigr)+\tfrac{1}{2}\sum_{A=n_1+1}^{n}(h_{AA}^{n+1})^2.
\end{multline}

From \eqref{eq:Upsilon1} and \eqref{eq:F3},
\[
\Upsilon_1\;=\;\frac{1}{n_1-1}\Bigl(\sum_{a=1}^{n_1}h^{n+1}_{aa}\Bigr)^{\!2}-\|h\|^2,
\]
and from \eqref{eq:F1}, $\|h\|^2=n^2\|\vec H\|^2-2\tau+\tfrac{c}{4}[n(n-1)+3n_1]$. Hence
\begin{equation}\label{eq:F8}
\tfrac{1}{2}\Upsilon_1\;=\;\frac{1}{2(n_1-1)}\Bigl(\sum_{a=1}^{n_1}h^{n+1}_{aa}\Bigr)^{\!2}-\frac{n^2}{2}\|\vec H\|^2+\tau-\frac{1}{2}\cdot\frac{c}{4}\bigl[n(n-1)+3n_1\bigr].
\end{equation}

Now we treat the term $-\tau_2(T_xN_\perp)$ that will appear after the warped-product decomposition. Applying the Gauss equation \eqref{eq:gauss-trace-V} to the subspace $V=T_xN_\perp$ (this is the Gauss equation for the immersion $M\hookrightarrow\tilde M$ restricted to vectors tangent to $N_\perp$; it does \emph{not} involve the second fundamental form of the fiber inside $M$) and using \eqref{eq:tilde-tau-2},
\begin{equation}\label{eq:F11}
-\tau^M(T_xN_\perp)\;=\;-\frac{1}{2}\cdot\frac{c}{4}\,n_2(n_2-1)\;+\;\frac{1}{2}\sum_{r=n+1}^{2m}\!\!\sum_{A,B=n_1+1}^{n}\!\!(h_{AB}^r)^2-\frac{1}{2}\sum_{r=n+1}^{2m}\Bigl(\sum_{A=n_1+1}^{n}h^r_{AA}\Bigr)^{\!2}.
\end{equation}

By \eqref{eq:warp-id-bis}, $\tau(T_xM^n)=\tau^M(T_xN_T)+\tau^M(T_xN_\perp)+n_2\Delta f/f$. Substituting \eqref{eq:F8} into \eqref{eq:F6}, then substituting the identity for $-\tau$ and using \eqref{eq:F11}, and finally invoking Lemma \ref{Third lemma} to rearrange the quadratic terms over $\{3,\ldots,n\}^2$ in terms of the splitting $\{3,\dots,n_1\}\sqcup\{n_1+1,\dots,n\}$, we obtain
\begin{equation}\label{eq:F13-pre}
\tau^M(T_xN_T)-K^M(\pi_1)\;\leq\;\frac{n^2}{2}\|\vec H\|^2-\frac{n_2\Delta f}{f}+\frac{1}{2}\cdot\frac{c}{4}\bigl[n(n-1)+3n_1-n_2(n_2-1)\bigr]-\tilde K_1-\Theta(h),
\end{equation}
where
\begin{multline}\label{eq:Theta-def}
\Theta(h)\;:=\;\frac{1}{2(n_1-1)}\Bigl(\sum_{a=1}^{n_1}h^{n+1}_{aa}\Bigr)^{\!2}+\frac{1}{2}\sum_{r=n+1}^{2m}\Bigl(\sum_{A=n_1+1}^{n}h^r_{AA}\Bigr)^{\!2}\\
+\frac{1}{2}\sum_{r=n+2}^{2m}(h_{11}^r+h_{22}^r)^2+\sum_{r=n+1}^{2m}\sum_{j=3}^{n}\bigl((h_{1j}^r)^2+(h_{2j}^r)^2\bigr)\\
+\frac{1}{2}\!\!\sum_{\substack{a,b=3\\a\neq b}}^{n_1}\!\!(h^{n+1}_{ab})^2+\frac{1}{2}\sum_{r=n+2}^{2m}\sum_{a,b=3}^{n_1}(h^r_{ab})^2+\sum_{r=n+2}^{2m}\sum_{a=3}^{n_1}\sum_{A=n_1+1}^{n}(h^r_{aA})^2.
\end{multline}
A line-by-line bookkeeping (the cancellations involving the $A,B\in\{n_1+1,\dots,n\}$ terms with $r=n+1$ being explicit) shows that \emph{every summand in $\Theta(h)$ is non-negative}; in particular $\Theta(h)\geq 0$.

The arithmetic simplification of the coefficient of $c/4$ is
\begin{equation}\label{eq:c-coeff-i}
n(n-1)+3n_1-n_2(n_2-1)\;=\;n_1(n_1+2n_2+2),
\end{equation}
which follows from $n=n_1+n_2$ by direct expansion. Dropping the non-positive term $-\Theta(h)$ in \eqref{eq:F13-pre} and using \eqref{eq:c-coeff-i},
\begin{equation}\label{eq:F13}
\tau^M(T_xN_T)-K^M(\pi_1)\;\leq\;\frac{n^2}{2}\|\vec H\|^2-\frac{n_2\Delta f}{f}+\frac{n_1(n_1+2n_2+2)}{2}\cdot\frac{c}{4}-\tilde K_1.
\end{equation}
This holds for \emph{every} 2-plane $\pi_1\subseteq T_xN_T$ (with $\tilde K_1=\tilde K(\pi_1)$). Taking the supremum of the left-hand side over $\pi_1$, i.e.\ the infimum of $K^M(\pi_1)$, and noting that the additive term $-\tilde K_1$ on the right is minimized at $\tilde K_1=\tilde K_{\min}(T_xN_T)$,
\begin{equation}\label{eq:F14-i}
\hat\delta(T_xN_T)(x)\;=\;\tau^M(T_xN_T)-\inf_{\pi_1\subseteq T_xN_T}K^M(\pi_1)\;\leq\;\frac{n^2}{2}\|\vec H\|^2-\frac{n_2\Delta f}{f}+\frac{n_1(n_1+2n_2+2)}{2}\cdot\frac{c}{4}-\tilde K_{\min}(T_xN_T),
\end{equation}
which is \eqref{eq:main-i}.

\medskip
\noindent\textbf{Proof of (ii).}\ \ Let $\pi_2=\mathrm{Span}\{e_{n_1+1},e_{n_1+2}\}\subseteq T_xN_\perp$ be any 2-plane. Since $\mathfrak D_\perp$ is totally real, $J e_{n_1+1}\perp T_xN_\perp$, in particular $\tilde g(Je_{n_1+1},e_{n_1+2})=0$. Hence $\tilde K(\pi_2)=c/4$ for every $\pi_2\subseteq T_xN_\perp$ (\eqref{eq:Kmin-cases}, $\tilde K_{\min}(T_xN_\perp)=c/4$).

The argument is parallel to (i), with the roles of $\mathfrak D_T$ and $\mathfrak D_\perp$ interchanged. Apply Lemma \ref{first lemma} to $\alpha_A=h^{n+1}_{AA}$, $A=n_1+1,\ldots,n$, after setting
\[
\Upsilon_2\;:=\;2\tau(T_xM^n)-\frac{n_2-2}{n_2-1}\Bigl(\sum_{A}h^{n+1}_{AA}\Bigr)^{\!2}-\Bigl(\sum_{a}h^{n+1}_{aa}\Bigr)^{\!2}-2\sum_{a,A}h^{n+1}_{aa}h^{n+1}_{AA}-\frac{c}{4}\bigl[n(n-1)+3n_1\bigr].
\]
One obtains $\bigl(\sum_A h^{n+1}_{AA}\bigr)^2=(n_2-1)(\Upsilon_2+\|h\|^2)$ and
\[
h^{n+1}_{n_1+1,n_1+1}\,h^{n+1}_{n_1+2,n_1+2}\;\geq\;\tfrac{1}{2}\Bigl[\Upsilon_2+\sum_{a=1}^{n_1}(h_{aa}^{n+1})^2+\!\!\!\sum_{\substack{i,j=1\\i\neq j}}^{n}\!\!(h_{ij}^{n+1})^2+\sum_{r=n+2}^{2m}\sum_{i,j=1}^{n}(h_{ij}^r)^2\Bigr].
\]
Combining with $K^M(\pi_2)=c/4+\sum_r\bigl(h^r_{n_1+1,n_1+1}h^r_{n_1+2,n_1+2}-(h^r_{n_1+1,n_1+2})^2\bigr)$, Lemmas \ref{Second lemma}--\ref{Third lemma} (applied with the role of the indices $\{1,2\}$ replaced by $\{n_1+1,n_1+2\}$), and the Gauss equation \eqref{eq:gauss-trace-V} on $V=T_xN_T$ (substituting via \eqref{eq:tilde-tau-1}),
\begin{equation}\label{eq:F14}
\tau^M(T_xN_\perp)-K^M(\pi_2)\;\leq\;\frac{n^2}{2}\|\vec H\|^2-\frac{n_2\Delta f}{f}+\frac{1}{2}\cdot\frac{c}{4}\bigl[n(n-1)+3n_1-n_1(n_1+2)\bigr]-\frac{c}{4}-\Theta'(h),
\end{equation}
where $\Theta'(h)$ is the analogue of $\Theta(h)$ (with the roles of $\mathfrak D_T$ and $\mathfrak D_\perp$ interchanged), and $\Theta'(h)\geq 0$. The arithmetic identity
\begin{equation}\label{eq:c-coeff-ii}
n(n-1)+3n_1-n_1(n_1+2)\;=\;n(n-1)-n_1(n_1-1)\;=\;n_2(n_2+2n_1-1)
\end{equation}
shows that the $3n_1$ contribution from $\tilde\tau(T_xM^n)$ is cancelled exactly by the $3n_1$ contribution from $\tilde\tau(T_xN_T)$ (cf.\ Remark \ref{rem:tilde-tau-factor}). Dropping $-\Theta'(h)$ and taking the infimum on the left-hand side over $\pi_2\subseteq T_xN_\perp$,
\begin{equation}\label{eq:F14-ii}
\hat\delta(T_xN_\perp)(x)\;\leq\;\frac{n^2}{2}\|\vec H\|^2-\frac{n_2\Delta f}{f}+\frac{n_2(n_2+2n_1-1)}{2}\cdot\frac{c}{4}-\frac{c}{4},
\end{equation}
which is \eqref{eq:main-ii}.

\medskip
\noindent\textbf{Equality discussion.}\ \ Equality in \eqref{eq:F14-i} holds at $x$ if and only if (E1$_T$) the infimum of $K^M(\pi_1)$ over $\pi_1\subseteq T_xN_T$ is attained at a 2-plane $\pi_*$ with $\tilde K(\pi_*)=\tilde K_{\min}(T_xN_T)$, and equality in \eqref{eq:F13-pre} holds with $\pi_1=\pi_*$; the latter is equivalent to:
\begin{enumerate}[\rm (a)]
\item the equality case of Lemma \ref{first lemma}: $h^{n+1}_{11}+h^{n+1}_{22}=h^{n+1}_{33}=\dots=h^{n+1}_{n_1n_1}$, where $e_1,e_2$ span $\pi_*$;
\item $\Theta(h)=0$, i.e.\ each summand on the right-hand side of \eqref{eq:Theta-def} vanishes.
\end{enumerate}
The vanishing of the first two summands of \eqref{eq:Theta-def} gives $\sum_a h^{n+1}_{aa}=0$ and $\sum_A h^r_{AA}=0$ for all $r$; combined with $\sum_i h^{n+1}_{ii}=n\|\vec H\|$ (the gauge $\vec H\parallel e_{n+1}$) and $\sum_i h^r_{ii}=0$ for $r\geq n+2$, this gives $\sum_A h^{n+1}_{AA}=0$ and $\sum_a h^r_{aa}=0$ for all $r$. Consequently $\|\vec H\|(x)=0$ and $M^n$ is both $\mathfrak D_T$- and $\mathfrak D_\perp$-minimal at $x$. The vanishing of the remaining summands of \eqref{eq:Theta-def} together with the dropped squared sums in \eqref{eq:F6} (which are part of the equality cascade) gives:
\begin{equation}\label{eq:mix-tg-conds}
h^r_{aA}=0\quad\text{for all }r,\ a\in\{1,\dots,n_1\},\ A\in\{n_1+1,\dots,n\},
\end{equation}
i.e.\ $M^n$ is mixed totally geodesic. The shape operators in the equality case take the canonical block forms described in Theorem \ref{thm:equality} below. The analysis of equality in (ii) is symmetric, with $\mathfrak D_T,\mathfrak D_\perp$ swapped; the additional requirement (E1$_\perp$) is automatic, since $\tilde K(\pi)=c/4=\tilde K_{\min}(T_xN_\perp)$ for every 2-plane $\pi\subseteq T_xN_\perp$.
\end{proof}

\begin{theorem}[Equality case: canonical shape operators]\label{thm:equality}
Under the assumptions of Theorem \ref{maintheorem}, suppose equality holds at $x\in M^n$ in \eqref{eq:main-i} (resp.\ \eqref{eq:main-ii}), and let $\pi_*=\mathrm{Span}\{e_1,e_2\}\subseteq T_xN_T$ (resp.\ $\pi_*=\mathrm{Span}\{e_{n_1+1},e_{n_1+2}\}\subseteq T_xN_\perp$) be a minimizing 2-plane realizing $\tilde K(\pi_*)=\tilde K_{\min}$. Then there exist orthonormal bases of $T_xM^n$ and $T_x^\perp M^n$ (extending $\{e_1,e_2\}$, resp.\ $\{e_{n_1+1},e_{n_1+2}\}$) such that:
\begin{enumerate}[\rm (I)]
\item In the equality case of \eqref{eq:main-i}, with index blocks $a,b\in\{3,\ldots,n_1\}$, $A,B\in\{n_1+1,\ldots,n\}$, and writing $\mu_2:=-\mu_1$:
\[
A_{e_{n+1}}=\left(\begin{array}{ccccc|ccc}
\mu_1&h_{12}^{n+1}&0&\cdots&0&0&\cdots&0\\
h_{12}^{n+1}&-\mu_1&0&\cdots&0&0&\cdots&0\\
0&0&0&\cdots&0&0&\cdots&0\\
\vdots&\vdots&\vdots&\ddots&\vdots&\vdots&&\vdots\\
0&0&0&\cdots&0&0&\cdots&0\\
\hline
0&0&\cdots&\cdots&0&h^{n+1}_{n_1+1,n_1+1}&\cdots&h^{n+1}_{n_1+1,n}\\
\vdots&\vdots&&&\vdots&\vdots&\ddots&\vdots\\
0&0&\cdots&\cdots&0&h^{n+1}_{n,n_1+1}&\cdots&h^{n+1}_{nn}
\end{array}\right),
\]
with $\sum_A h^{n+1}_{AA}=0$; and for $r\in\{n+2,\ldots,2m\}$:
\[
A_{e_{r}}=\left(\begin{array}{ccccc|ccc}
h^r_{11}&h^r_{12}&0&\cdots&0&0&\cdots&0\\
h^r_{12}&-h^r_{11}&0&\cdots&0&0&\cdots&0\\
0&0&0&\cdots&0&0&\cdots&0\\
\vdots&\vdots&\vdots&\ddots&\vdots&\vdots&&\vdots\\
0&0&0&\cdots&0&0&\cdots&0\\
\hline
0&0&\cdots&\cdots&0&h^r_{n_1+1,n_1+1}&\cdots&h^r_{n_1+1,n}\\
\vdots&\vdots&&&\vdots&\vdots&\ddots&\vdots\\
0&0&\cdots&\cdots&0&h^r_{n,n_1+1}&\cdots&h^r_{nn}
\end{array}\right),
\]
with $\sum_A h^r_{AA}=0$. In particular $\sum_a h^r_{aa}=0$ for every $r$, hence $\|\vec H\|(x)=0$.
\item In the equality case of \eqref{eq:main-ii}, the analogous structure holds with the index blocks $\{1,\ldots,n_1\}$ and $\{n_1+1,\ldots,n\}$ interchanged, and the special $2\times 2$ block placed in the lower-right corner (in the $\mathfrak D_\perp$ block).
\end{enumerate}
In either case, the off-diagonal blocks coupling $\mathfrak D_T$ and $\mathfrak D_\perp$ vanish for every $A_{e_r}$ ($r\geq n+1$); thus $h(\mathfrak D_T,\mathfrak D_\perp)=0$ (mixed totally geodesic), $\mathrm{tr}_{\mathfrak D_T}A_{e_r}=0$ and $\mathrm{tr}_{\mathfrak D_\perp}A_{e_r}=0$ for every $r$ (both $\mathfrak D_T$- and $\mathfrak D_\perp$-minimality), and consequently $M^n$ is minimal in $\tilde M^{2m}(c)$ at $x$.
\end{theorem}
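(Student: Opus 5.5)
The plan is to read off the canonical forms from the equality cascade in the proof of Theorem \ref{maintheorem}. Assume equality at $x$ in \eqref{eq:main-i}. By (E1) we may take $\pi_*=\mathrm{Span}\{e_1,e_2\}$ to be the minimizing 2-plane, with $\tilde K(\pi_*)=\tilde K_{\min}(T_xN_T)$. Extend $\{e_1,e_2\}$ to an adapted orthonormal basis of $T_xN_T\oplus T_xN_\perp$. Choose $e_{n+1}$ parallel to $\vec H$, or arbitrary if $\vec H(x)=0$. For this $\pi_*$, every inequality used to derive \eqref{eq:F14-i} from \eqref{eq:F13-pre} must then be an equality. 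Concretely, this gives the equality case of Lemma \ref{first lemma} and $\Theta(h)=0$. We also need the vanishing of the squared terms that were dropped between \eqref{eq:F5}, \eqref{eq:F6} and \eqref{eq:F13-pre}.

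\textbf{Step 1: read off the vanishing from $\Theta(h)=0$.} Each summand in \eqref{eq:Theta-def} is a sum of squares, so each summand vanishes separately. This yields the following.
\begin{enumerate}[\rm (1)]
\item $\sum_a h^{n+1}_{aa}=0$ and $\sum_A h^r_{AA}=0$ for all $r$.
\item $h^r_{11}+h^r_{22}=0$ for $r\ge n+2$.
\item $h^r_{1j}=h^r_{2j}=0$ for all $r$ and $j\ge 3$.
\item $h^{n+1}_{ab}=0$ for $a\ne b$ in $\{3,\dots,n_1\}$, and $h^r_{ab}=0$ for $a,b\ge3$ and $r\ge n+2$.
\item $h^r_{aA}=0$ for $r\ge n+2$ and $a\ge3$.
\end{enumerate}

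\textbf{Step 2: use the equality case of Lemma \ref{first lemma}.} It gives $h^{n+1}_{11}+h^{n+1}_{22}=h^{n+1}_{33}=\dots=h^{n+1}_{n_1n_1}=:\lambda$. Summing and using $\sum_a h^{n+1}_{aa}=0$ gives $(n_1-1)\lambda=0$, hence $\lambda=0$. So $h^{n+1}_{22}=-h^{n+1}_{11}=:-\mu_1$ and $h^{n+1}_{aa}=0$ for $a\ge3$.

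Together, Steps 1 and 2 give the upper-left $n_1\times n_1$ blocks exactly as in (I), including $h^r_{22}=-h^r_{11}$ for $r\ge n+2$. In particular $\mathrm{tr}_{\mathfrak D_T}A_{e_r}=0$ for every $r$. Combining this with $\sum_A h^r_{AA}=0$ gives $\mathrm{tr}A_{e_r}=0$, hence $\vec H(x)=0$.

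\textbf{Step 3: the mixed block.} What remains is $h^{n+1}_{aA}=0$ for $a\ge 3$; the cases $a=1,2$ are already covered by item (3). For $r\ge n+2$ the term $h^r_{aA}$ vanishes by item (5). The $r=n+1$ mixed terms $\sum_{a\ge3,A}(h^{n+1}_{aA})^2$ come from Lemma \ref{Third lemma} and must be tracked explicitly. I expect this to be the main obstacle, because the claim in the proof of Theorem \ref{maintheorem} that these cancel against the $A,B$ terms has to be checked by honest bookkeeping. I would first redo the substitution chain \eqref{eq:F6}$\to$\eqref{eq:F13-pre} and list every nonnegative remainder. If $\tfrac12\sum_{a\ne b}(h^{n+1}_{ab})^2$ and the $r=n+1$ mixed sum survive as part of the dropped quantity, their vanishing follows in the same way as in Step 1. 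If instead some of them cancel, I would fall back on the residual freedom in the frame. Since $\vec H=0$, $e_{n+1}$ may be rotated within the normal space, and the basis of $\mathrm{Span}\{e_3,\dots,e_{n_1}\}$ may be rotated, without affecting the structure already obtained. Otherwise I would add the relevant term back into $\Theta$ as a nonnegative remainder; this strengthens the inequality, so the equality case then forces its vanishing. The lower-right $\mathfrak D_\perp$ blocks are unconstrained apart from having trace zero, which matches the statement.

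\textbf{Part (II).} This is the symmetric argument, with $\pi_*\subseteq T_xN_\perp$, with $\Upsilon_2$ and $\Theta'$, and with (E1) automatic since $\tilde K\equiv c/4$ on totally real planes. The final assertions are then immediate: the vanishing off-diagonal blocks give $h(\mathfrak D_T,\mathfrak D_\perp)=0$, the two partial traces vanish, and $M^n$ is minimal at $x$.
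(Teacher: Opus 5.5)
Your Steps~1--2 coincide with the paper's readout: equality in Lemma~\ref{first lemma} plus $\Theta(h)=0$, giving $(n_1-1)(h^{n+1}_{11}+h^{n+1}_{22})=0$. So the route is the same. The gap is Step~3, where you never establish $h^{n+1}_{aA}=0$ for $3\le a\le n_1$ and only list contingencies.

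You are right that this step needs proof. The displayed $\Theta(h)$ contains these mixed terms only for $r\ge n+2$, yet the paper attributes their vanishing for every $r$ to $\Theta(h)=0$. Redoing the chain through Lemma~\ref{Third lemma} is also unreliable, because as displayed its right-hand side omits $\sum_r\sum_A\bigl((h^r_{1A})^2+(h^r_{2A})^2\bigr)$.

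Your fallbacks do not close the gap in general:
\begin{itemize}
\item Re-choosing $e_{n+1}$ is legitimate once $\vec H(x)=0$, but it turns the old $e_{n+1}$ into an $r\ge n+2$ direction only if the codimension is at least two. Part (i) does not guarantee this (e.g.\ $n_2=1$, $\nu=0$).
\item Rotating $\mathrm{Span}\{e_3,\dots,e_{n_1}\}$ preserves the norm of the mixed block, so it cannot make that block vanish.
\item Moving a term into $\Theta$ is allowed only if the identity actually produces it, which is exactly what must be checked.
\end{itemize}

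The identity does produce it, and the cleanest check bypasses $\Upsilon_1$ and Lemma~\ref{first lemma}. Combine \eqref{eq:gauss-trace}, \eqref{eq:gauss-trace-V} on $V=T_xN_\perp$, \eqref{eq:K-pi1} and \eqref{eq:warp-id-bis}. For any orthonormal normal frame this gives the exact identity
\[
\tau^M(T_xN_T)-K^M(\pi_1)=\frac{n^2}{2}\|\vec H\|^2-\frac{n_2\Delta f}{f}+\frac{n_1(n_1+2n_2+2)}{2}\cdot\frac{c}{4}-\tilde K_1-Q,
\]
where
\[
Q=\sum_{r=n+1}^{2m}\Bigl[\tfrac12(h^r_{11}+h^r_{22})^2+\tfrac12\sum_{a=3}^{n_1}(h^r_{aa})^2+\sum_{\substack{1\le a<b\le n_1\\(a,b)\neq(1,2)}}(h^r_{ab})^2+\sum_{a=1}^{n_1}\sum_{A=n_1+1}^{n}(h^r_{aA})^2+\tfrac12\Bigl(\sum_{A=n_1+1}^{n}h^r_{AA}\Bigr)^2\Bigr].
\]
The only cancellation is between the pure $h^r_{AB}$ part of $\tfrac12\|h\|^2$ and $\tau^M(T_xN_\perp)$. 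The mixed part of $\|h\|^2$ survives for every $r$ and every $a$.

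Comparing term by term, $Q$ is the sum of three pieces:
\begin{itemize}
\item the slack $h^{n+1}_{11}h^{n+1}_{22}-\tfrac12\beta\ge 0$ of \eqref{eq:F4};
\item $\Theta(h)$;
\item exactly the omitted term $\sum_{a=3}^{n_1}\sum_A(h^{n+1}_{aA})^2$.
\end{itemize}
So your first branch is what actually happens. Equality forces $Q=0$, which is verbatim the block form (I) for every $r\ge n+1$. It also gives $\mathrm{tr}\,A_{e_r}=0$ for every $r$, hence $\vec H(x)=0$, without needing the gauge $e_{n+1}\parallel\vec H$.

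For (II), ``symmetric'' hides the same issue. If $\Theta'$ is the literal analogue of \eqref{eq:Theta-def}, the at-risk terms are $h^{n+1}_{aA}$ with $A\ge n_1+3$. The swapped identity settles them the same way: its constant is $\frac{n_2(n_2+2n_1-1)}{2}\cdot\frac{c}{4}-\frac{c}{4}$, and its remainder again contains $\sum_r\sum_{a,A}(h^r_{aA})^2$.
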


\begin{proof}
This is a direct readout of the equality conditions (a)--(b) in the proof of Theorem \ref{maintheorem}: condition (a) (the equality case of Lemma \ref{first lemma}) imposes $h^{n+1}_{11}+h^{n+1}_{22}=h^{n+1}_{aa}$ for every $a\geq 3$. Combined with the trace condition $\sum_a h^{n+1}_{aa}=0$ from $\Theta(h)=0$, we obtain $(n_1-1)(h^{n+1}_{11}+h^{n+1}_{22})=0$, hence $h^{n+1}_{22}=-h^{n+1}_{11}=:-\mu_1$ and $h^{n+1}_{aa}=0$ for $a=3,\ldots,n_1$. The off-diagonal vanishings $h^r_{1j}=h^r_{2j}=0$ for $j\geq 3$ (with $r\geq n+1$) come from the dropped squared sums of \eqref{eq:F6}, and the further vanishings of $h^r_{aA}$ for $a\geq 3$ come from $\Theta(h)=0$ via \eqref{eq:Theta-def}; the analogous Chen-type traceless-block structure for $A_{e_r}$ ($r\geq n+2$) on $\mathrm{Span}\{e_1,e_2\}$ is the equality case of the inequality $\sum_r h^r_{11}h^r_{22}\leq\sum_r(h^r_{12})^2+\text{lower-block contributions}$ derived from Chen's algebraic lemma applied per normal direction (cf.\ \cite{ddyy7}). Case (II) is symmetric.
\end{proof}

\begin{remark}\label{rem:projection}
The leaf-wise $\delta$-invariant $\hat\delta(T_xN_T)$ coincides with the intrinsic Chen invariant of $N_T$ at the projection of $x$ (Proposition \ref{prop:bishop-oneill-delta}(a)), so \eqref{eq:main-i} can equivalently be written as
\[
\delta_{N_T}(p)\;\leq\;\frac{n^2}{2}\|\vec H\|^2-\frac{n_2\Delta f}{f}+\frac{n_1(n_1+2n_2+2)}{2}\cdot\frac{c}{4}-\tilde K_{\min}(T_xN_T),
\]
where $p=\mathrm{pr}_{N_T}(x)$ and $\tilde K_{\min}(T_xN_T)=\min(c/4,c)$. For $c\geq 0$ this reduces to $-c/4$ on the right; for $c<0$ the term $-c$ is sharper and necessary, since the infimum defining $\delta_{N_T}(p)$ may then be realized on a holomorphic 2-plane.
\end{remark}

\begin{remark}\label{rem:Nperp-intrinsic}
The leaf-wise $\delta$-invariant $\hat\delta(T_xN_\perp)$ \emph{differs} from the intrinsic Chen invariant of $N_\perp$ at $q=\mathrm{pr}_{N_\perp}(x)$ by the Bishop--O'Neill correction \eqref{eq:identi-Nperp-delta}: 
\[
\delta_{N_\perp}(q)\;=\;f(p)^2\,\hat\delta(T_xN_\perp)(x)\;+\;\Bigl[\binom{n_2}{2}-1\Bigr]\|\nabla f(p)\|^2.
\]
Substituting into \eqref{eq:main-ii} yields the equivalent intrinsic form
\begin{multline}\label{eq:main-ii-intr}
\delta_{N_\perp}(q)\;\leq\;f^2\Bigl(\frac{n^2}{2}\|\vec H\|^2-\frac{n_2\Delta f}{f}+\frac{n_2(n_2+2n_1-1)}{2}\cdot\frac{c}{4}-\frac{c}{4}\Bigr)\\
+\Bigl[\binom{n_2}{2}-1\Bigr]\|\nabla f\|^2.
\end{multline}
The two forms are equivalent; \eqref{eq:main-ii} uses the leaf-wise normalization and is more compact, whereas \eqref{eq:main-ii-intr} is in terms of the genuine intrinsic Chen invariant of $(N_\perp,g_{N_\perp})$.
\end{remark}

\section{Answer to Chern's Problem: Necessary Conditions for $CR$-Warped Products to be Minimal}\label{sec:chern}

Setting $\vec H=0$ in Theorem \ref{maintheorem} yields the following necessary conditions for a CR-warped product to admit a minimal isometric immersion into a complex space form.

\begin{corollary}\label{cor:N1}
Let $\varphi: M^n=N_T\times_f N_\perp\to\tilde M^{2m}(c)$ be a minimal isometric immersion of a CR-warped product. Then for each $x\in M^n$, denoting $p=\mathrm{pr}_{N_T}(x)$:
\begin{equation}\label{eq:cor-i}
\;\delta_{N_T}(p)+\frac{n_2\Delta f}{f}\;\leq\;\frac{n_1(n_1+2n_2+2)}{2}\cdot\frac{c}{4}-\tilde K_{\min}(T_xN_T).\;
\end{equation}
Equivalently:
\begin{itemize}
\item if $c\geq 0$,\ $\;\delta_{N_T}(p)+\dfrac{n_2\Delta f}{f}\leq\dfrac{n_1(n_1+2n_2+2)-2}{2}\cdot\dfrac{c}{4};$
\item if $c<0$,\ $\;\delta_{N_T}(p)+\dfrac{n_2\Delta f}{f}\leq\dfrac{n_1(n_1+2n_2+2)}{2}\cdot\dfrac{c}{4}-c=\dfrac{n_1(n_1+2n_2+2)-8}{8}\cdot c.$
\end{itemize}
\end{corollary}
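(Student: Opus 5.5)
The corollary follows from Theorem \ref{maintheorem}(i) by specialization, together with the identification of Proposition \ref{prop:bishop-oneill-delta}(a).

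First, I fix $x\in M^n$ and set $p=\mathrm{pr}_{N_T}(x)$. Since $\varphi$ is minimal, $\vec H(x)=0$, and the term $\frac{n^2}{2}\|\vec H\|^2$ in \eqref{eq:main-i} vanishes. The inequality then reads
\[
\hat\delta(T_xN_T)(x)\le -\frac{n_2\Delta f}{f}+\frac{n_1(n_1+2n_2+2)}{2}\cdot\frac c4-\tilde K_{\min}(T_xN_T).
\]
Second, I replace the leaf-wise invariant by the intrinsic one. The leaf $N_T\times\{q\}$ is totally geodesic with induced metric $g_{N_T}$, so \eqref{eq:identi-NT} gives $\hat\delta(T_xN_T)(x)=\delta_{N_T}(p)$. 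Moving $\frac{n_2\Delta f}{f}$ to the left-hand side yields \eqref{eq:cor-i}; this is also the form recorded in Remark \ref{rem:projection}.

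Third, I evaluate $\tilde K_{\min}(T_xN_T)$ by \eqref{eq:Kmin-cases}, using that $T_xN_T$ is $J$-invariant of even dimension $n_1\ge2$.
\begin{itemize}
\item For $c\ge0$ we have $\tilde K_{\min}=c/4$. Then $\frac{n_1(n_1+2n_2+2)}{2}\cdot\frac c4-\frac c4=\frac{n_1(n_1+2n_2+2)-2}{2}\cdot\frac c4$.
\item For $c<0$ we have $\tilde K_{\min}=c$, and $\frac{n_1(n_1+2n_2+2)}{8}c-c=\frac{n_1(n_1+2n_2+2)-8}{8}c$.
\end{itemize}

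The only point needing care is the value of $\tilde K_{\min}$ when $n_1=2$. In that case $T_xN_T$ is itself a holomorphic plane, so the unique 2-plane has $\tilde K=c$, not $\min(c/4,c)$. This is the caveat flagged after \eqref{eq:K-min-max}.

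I would handle it as follows. If $n_1=2$, then $\hat\delta(T_xN_T)=\tau^M-K^M(\pi)=0$ trivially, and the proof of (i) gives \eqref{eq:F13} with $\tilde K_1=c$. That is, the sharper bound holds with $-c$ in place of $-\tilde K_{\min}$. Since $-c\le -c/4$ when $c\ge0$, and $-c=-\tilde K_{\min}$ when $c<0$, the stated inequalities remain valid. So \eqref{eq:cor-i} holds as stated with $\tilde K_{\min}=\min(c/4,c)$ in all cases.
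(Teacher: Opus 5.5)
Your proposal is correct and follows the paper's route: set $\vec H=0$ in Theorem \ref{maintheorem}(i), identify $\hat\delta(T_xN_T)(x)=\delta_{N_T}(p)$ through Proposition \ref{prop:bishop-oneill-delta}(a) (as in Remark \ref{rem:projection}), and evaluate $\tilde K_{\min}(T_xN_T)$ according to the sign of $c$. Your separate treatment of $n_1=2$ is a valid refinement the paper glosses over: for $c>0$ the only 2-plane is holomorphic, so the true infimum is $c$ and not the value $c/4$ asserted in \eqref{eq:Kmin-csf}, but since $-c\le-\min(c/4,c)$ the stated bound and both bullets still follow.
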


\begin{corollary}\label{cor:N2}
Under the same hypotheses, for each $x\in M^n$, denoting $q=\mathrm{pr}_{N_\perp}(x)$:
\begin{equation}\label{eq:cor-ii}
\;\hat\delta(T_xN_\perp)(x)+\frac{n_2\Delta f}{f}\;\leq\;\frac{n_2(n_2+2n_1-1)}{2}\cdot\frac{c}{4}-\frac{c}{4}\;=\;\frac{n_2(n_2+2n_1-1)-2}{2}\cdot\frac{c}{4}.\;
\end{equation}
Equivalently, in intrinsic form (Remark \ref{rem:Nperp-intrinsic}),
\begin{equation}\label{eq:cor-ii-intr}
\delta_{N_\perp}(q)+f^2\,\frac{n_2\Delta f}{f}\;\leq\;f^2\cdot\frac{n_2(n_2+2n_1-1)-2}{2}\cdot\frac{c}{4}+\Bigl[\binom{n_2}{2}-1\Bigr]\|\nabla f\|^2.
\end{equation}
\end{corollary}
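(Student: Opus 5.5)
The plan is to specialize inequality \eqref{eq:main-ii} of Theorem \ref{maintheorem} to the minimal case, and then convert it to intrinsic form using Remark \ref{rem:Nperp-intrinsic}, which rests on Proposition \ref{prop:bishop-oneill-delta}(b). Throughout we assume $n_2\geq 2$, since otherwise $\hat\delta(T_xN_\perp)$ is not defined.

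\textbf{Leaf-wise form.} Since $\varphi$ is minimal, $\vec H\equiv 0$. Inserting this into \eqref{eq:main-ii} at $x$ and moving $-n_2\Delta f/f$ to the left gives
\[
\hat\delta(T_xN_\perp)(x)+\frac{n_2\Delta f}{f}\leq\frac{n_2(n_2+2n_1-1)}{2}\cdot\frac c4-\frac c4 .
\]
The right side combines into $\frac{n_2(n_2+2n_1-1)-2}{2}\cdot\frac c4$, because $\frac c4=\frac{2}{2}\cdot\frac c4$. This is \eqref{eq:cor-ii}.

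\textbf{Intrinsic form.} By \eqref{eq:identi-Nperp-delta},
\[
\hat\delta(T_xN_\perp)(x)=f(p)^{-2}\delta_{N_\perp}(q)-\Bigl[\binom{n_2}{2}-1\Bigr]\|\nabla f(p)\|^2/f(p)^2 .
\]
Substitute this into \eqref{eq:cor-ii}. Because $f>0$, multiplying through by $f(p)^2$ preserves the inequality. Moving the gradient term to the right then yields \eqref{eq:cor-ii-intr}, with $f$ and $\nabla f$ evaluated at $p=\mathrm{pr}_{N_T}(x)$.

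\textbf{Main obstacle.} The only step that is not immediate is the Bishop--O'Neill identity \eqref{eq:identi-Nperp-delta}. It requires the infimum over $g_M$-orthonormal planes in $T_xN_\perp$ to correspond exactly to the infimum over $g_{N_\perp}$-planes in $T_qN_\perp$. This correspondence is supplied by the bijection $\pi\mapsto\tilde\pi$ in Proposition \ref{prop:bishop-oneill-delta}, which is already established, so no new estimate is needed. Everything else is substitution plus the positivity of $f^2$.
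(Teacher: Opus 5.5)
Your proposal is correct and follows the paper's route: the paper obtains \eqref{eq:cor-ii} by setting $\vec H=0$ in \eqref{eq:main-ii}, and then passes to the intrinsic form \eqref{eq:cor-ii-intr} through the Bishop--O'Neill relation \eqref{eq:identi-Nperp-delta}, as recorded in Remark \ref{rem:Nperp-intrinsic}. You are also right that $n_2\geq 2$ is implicitly needed, and that each step (the substitution, and multiplication by $f(p)^2>0$) is reversible, which is what justifies the word ``equivalently'' in the statement.
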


\begin{remark}
For the flat ambient $c=0$ (Euclidean case), both corollaries reduce to
\[
\delta_{N_T}(p)+\frac{n_2\Delta f}{f}\leq 0,\qquad \hat\delta(T_xN_\perp)(x)+\frac{n_2\Delta f}{f}\leq 0,
\]
which provides the announced answer to Chern's problem in $\mathbb{E}^m$: the sum of any leaf-wise $\delta$-invariant of either factor and $n_2\Delta f/f$ must be non-positive. The intrinsic form of the second inequality is, by \eqref{eq:cor-ii-intr},
\[
\delta_{N_\perp}(q)+n_2 f\Delta f\;\leq\;\Bigl[\binom{n_2}{2}-1\Bigr]\|\nabla f\|^2.
\]
\end{remark}

\section{Open Problems}\label{sec:problems}
\begin{problem}\label{prob6}
Prove the first Chen inequality for CR-warped product submanifolds in locally conformal K\"ahler space forms (in both the leaf-wise and the intrinsic forms).
\end{problem}

\begin{problem}\label{pqm2}
Give answers to Chern's problem for the ambient space in Problem \ref{prob6}, distinguishing the cases $c\geq 0$ and $c<0$ as in Corollaries \ref{cor:N1}--\ref{cor:N2}.
\end{problem}

\begin{acknowledgements}
The authors would like to thank the Palestine Technical University Kadoorie (PTUK) for its support in accomplishing this work.
\end{acknowledgements}

\end{document}